\documentclass[11pt]{amsart}

\usepackage{amsmath,amssymb,amsthm,enumitem,hyperref}
\usepackage[margin=1.05in]{geometry}
\usepackage[T1]{fontenc}
\usepackage{lmodern}
\usepackage{microtype}

\hypersetup{colorlinks=true, linkcolor=blue, citecolor=blue, urlcolor=blue}

\newtheorem{theorem}{Theorem}[section]
\newtheorem{lemma}[theorem]{Lemma}
\newtheorem{proposition}[theorem]{Proposition}

\theoremstyle{definition}
\newtheorem*{definition}{Definition}

\newtheorem*{problem}{Problem}
\newtheorem*{example}{Example}
\theoremstyle{remark}
\newtheorem{remark}{Remark}[section]

\usepackage{soul}

\newcommand{\C}{\mathcal C}
\newcommand{\D}{\mathcal D}
\newcommand{\B}{\mathcal B}
\newcommand{\R}{\mathcal R}
\newcommand{\F}{\mathcal F}
\newcommand{\res}{\operatorname{res}}
\newcommand{\wt}{\operatorname{wt}}

\title[A problem of Andrews and Dhar on partitions]{A problem of Andrews and Dhar on partitions}
\author{Simon Mahns, Ken Ono and Jujian Zhang}

\address{Axiom Math, 124 University Avenue, Palo Alto, CA 94301}
\email{simon@axiommath.ai}
\email{ken@axiommath.ai}
\email{jujian@axiommath.ai}

\date{\today}

\begin{document}
\begin{abstract}
This paper is motivated by a broad question about AI-assisted mathematics: can an AI system help discover and certify an explicit bijection between two infinite sequences of complicated combinatorial sets already known to be equinumerous?  The challenge is to find a reversible structure explaining that equality uniformly across the sequence. We give an affirmative test case in the setting of a partition problem.
Andrews and Dhar introduced two partition families \(\C_3(n)\) and \(\D_3(n)\), and   for
``nonexceptional'' \(n\), they asked for a bijective proof of their equality
\[
        |\C_3(n)|=\frac{|\D_3(n)|}{3}.
\]
We prove a residue-class equidistribution
theorem for \(\D_3(n)\) that identifies a ``canonical third'' subset
\(\D_3^{(0)}(n)\subseteq \D_3(n)\). Answering their question, we construct a bijection
\[
        \iota_n: \C_3(n)\longrightarrow \D_3^{(0)}(n)
\]
as  a highly structured composition of four maps. AxiomProver autonomously produced and
Lean-verified the equidistribution theorem. The bijection was found
through human--AxiomProver collaboration, and the theorem was formalized and verified
through the collaboration.
 \end{abstract}

 \subjclass[2020]{Primary 05A17; Secondary 05A19}

\keywords{integer partitions, Glaisher's theorem, partition bijections, residue-class equidistribution, root-of-unity filters, flat partitions, regular partitions, Stockhofe bijection, formal verification}

\maketitle

\section{Introduction}

Glaisher's partition theorem says the following.  Fix an integer $m\ge2$.  Then, for every $n\ge0$, the number of partitions of $n$ in which no part is repeated $m$ or more times is equal to the number of partitions of $n$ into parts not divisible by $m$.  When $m=2$, this is Euler's theorem equating partitions into distinct parts with partitions into odd parts.  The classical bijection is the base $m$ carrying map: repeated equal parts are bundled in groups of $m$, and those bundles are carried upward by multiplying the part by $m$.

Andrews and Dhar recently found a companion to Glaisher's theorem in which the largest part on one side is transformed into a smallest-part condition on the other side.  Their cubic case is especially striking.  They define a family $\C_3(n)$ of partitions into positive integers whose largest part is divisible by $3$, say $3J$, and whose parts $\leq J$ occur with multiplicity at most two, and a family $\D_3(n)$ of partitions into nonnegative parts whose smallest part occurs exactly three times and whose larger parts occur at most twice.  For nonexceptional positive integers $n$ (i.e. integers not of the form
$T_k+1$ with $T_k:=k(k+1)/2$), they proved that
\[
        |\C_3(n)|=|\D_3(n)|/3.
\]
They asked the following problem (see Section~4 of \cite{AndrewsDhar}).

\begin{problem} For $n$ not one more than a triangular number, provide a bijective proof of the equality
\[
|\C_3(n)|=|\D_3(n)|/3.
\]
\end{problem}

We address this problem by first explaining the factor $3$, which is accounted for by a residue statistic on the $\D_3$-objects.  Namely, if $\mu\in \D_3(n)$, let $\tau(\mu)$ denote the number of parts of $\mu$ strictly larger than its smallest part.  We construct an explicit bijection from the $\C_3$-objects to the residue-zero class of $\tau(\mu)$ modulo $3$.  The equality of the three residue classes is proved by a root-of-unity generating-function calculation, so the resulting proof is an explicit bijection to a canonical third of $\D_3(n)$.  

To make this precise, 
we write a partition as a weakly decreasing finite list of nonnegative integers. Parts of size $0$ are allowed for $\D_3(n)$, while $\C_3(n)$ only consists of positive integer parts.  The \emph{weight} of a partition is the sum of its parts. Given a partition
\[
\lambda=(\lambda_1,\lambda_2,\ldots,\lambda_r),
\qquad
\lambda_1\geq \lambda_2\geq \cdots \geq \lambda_r,
\]
its \emph{Ferrers diagram} is the left-justified array of boxes with \(\lambda_i\)
boxes in the \(i\)-th row.  For example, the partition \(\lambda=(5,3,3,1)\)
has Ferrers diagram
\[
\begin{array}{ccccc}
\Box&\Box&\Box&\Box&\Box\\
\Box&\Box&\Box\\
\Box&\Box&\Box\\
\Box
\end{array}.
\]
The \emph{conjugate partition} \(\lambda'\) is obtained by interchanging rows and
columns in this diagram, or equivalently by reflecting the diagram across its
main diagonal.  Therefore, the \(i\)-th part of \(\lambda'\) is the number of rows of
the Ferrers diagram of \(\lambda\) that contain at least \(i\) boxes:
\[
\lambda'_i=\#\{j:\lambda_j\ge i\}.
\]
For instance, for \(\lambda=(5,3,3,1)\), we have
\[
\lambda'=(4,3,3,1,1),
\]
because the original diagram has four rows of length at least \(1\), three rows
of length at least \(2\), three rows of length at least \(3\), one row of length
at least \(4\), and one row of length at least \(5\).  Parts of size \(0\) do
not contribute boxes to the Ferrers diagram, so they do not affect the conjugate
partition.

We now turn to the explicit partition functions considered by Andrews and Dhar \cite{AndrewsDhar}.

\begin{definition}[The Andrews--Dhar partition families]\label{def:families}
Suppose that $n$ is a positive integer.

\smallskip
\noindent
(a) Let $\C_3(n)$ be the set of partitions $\lambda$ of $n$ such that
\begin{enumerate}[label=(\roman*)]
\item the largest part of $\lambda$ is divisible by $3$, say it is $3J$;
\item every part of $\lambda$ which is at most $J$ occurs at most twice.
\end{enumerate}
Let $C_3(n):=|\C_3(n)|$.

\smallskip
\noindent
(b) Let $\D_3(n)$ be the set of partitions $\mu$ of $n$ into nonnegative parts such that
\begin{enumerate}[label=(\roman*)]
\item the smallest part of $\mu$ occurs exactly three times;
\item every part strictly larger than the smallest part occurs at most twice.
\end{enumerate}
Let $D_3(n):=|\D_3(n)|$.
For later use in generating functions, we also set
\[
        \D_3(0):=\{(0,0,0)\},\qquad D_3(0):=1.
\]
\end{definition}

Parts of size $0$ are allowed only in the $D_3$ case.  They are never allowed in $\C_3(n)$.  Except when $\D_3(0)$ is explicitly used, all partitions outside the $D_3$ case have positive parts.  In $\D_3(n)$ the condition ``the smallest part occurs exactly three times'' determines exactly how zeros may occur: a $D_3$-partition either has no zero parts at all, or has exactly three zero parts.  In the latter case those three zeros are the three smallest parts, and every positive part is strictly larger than the smallest part and therefore occurs at most twice.  No partition counted by $D_3(n)$ has one zero, two zeros, or more than three zeros.

\begin{example}
For example, $(9,4)\in\C_3(13)$: its largest part is $9=3\cdot3$, and the parts at most $3$ occur at most twice.  Also, we have
\[
       (4,3,3,1,1,1)\in\D_3(13),
\]
because the smallest part $1$ occurs exactly three times and the larger parts $4,3,3$ have multiplicity at most two.
\end{example}

To isolate the denominator $3$ in $|\D_3(n)|/3$, we use the following residue statistic.
For $\mu\in\D_3(n)$, we define
\[
   \tau(\mu):=\#\{\text{parts of }\mu\text{ strictly larger than its smallest part}\}.
\]
For $i\in\{0,1,2\}$, set
\[
   \D_3^{(i)}(n):=\{\mu\in\D_3(n):\tau(\mu)\equiv i\pmod3\},
   \qquad
   D_3^{(i)}(n):=|\D_3^{(i)}(n)|.
\]

Our first result shows that $\tau(\mu)\pmod 3$ divides the $\D_3$ partitions into three disjoint sets of equal size when $n$ is not one more than a triangular number.

\begin{theorem}[Residue-class equidistribution]\label{thm:thm1}
Let $T_r:=r(r+1)/2$ be the $r$th triangular number.  If $n\ge1$ and $n\ne T_r+1$ for every integer $r\ge0$, then
\[
        D_3^{(0)}(n)=D_3^{(1)}(n)=D_3^{(2)}(n)=\frac{D_3(n)}3.
\]
\end{theorem}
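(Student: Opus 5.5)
The plan is to use a two-variable generating function and a root-of-unity filter, which reduces the whole statement to one $q$-series identity. Let $z$ mark $\tau$ and sort $\mu\in\D_3(n)$ by its smallest part $s\ge0$. The three copies of $s$ contribute $q^{3s}$. Each part $k>s$ occurs $0$, $1$ or $2$ times, and each occurrence raises $\tau$ by one. Hence
\[
F(z,q):=\sum_{n\ge0}\sum_{\mu\in\D_3(n)} z^{\tau(\mu)}q^n=\sum_{s\ge0} q^{3s}P_s(z),\qquad P_s(z):=\prod_{k>s}\bigl(1+zq^k+z^2q^{2k}\bigr).
\]
With $\omega=e^{2\pi i/3}$ we have $3D_3^{(i)}(n)=\sum_{j=0}^2\omega^{-ij}[q^n]F(\omega^j,q)$. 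Moreover $F(\bar\omega,q)=\overline{F(\omega,q)}$ coefficientwise, since the coefficients are integer polynomials in $z$. So equidistribution at $n$ is equivalent to $[q^n]F(\omega,q)=0$. It therefore suffices to show that $F(\omega,q)-1$ is supported on exponents of the form $T_r+1$.

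The first step is a telescoping. The key fact is $(1-\omega x)(1+\omega x+\omega^2x^2)=1-x^3$. Applied with $x=q^s$, it gives $P_{s-1}(1-\omega q^s)=(1-q^{3s})P_s$, that is, $q^{3s}P_s=P_s-P_{s-1}+\omega q^sP_{s-1}$ for $s\ge1$. Summing over $s\ge1$ (the sum $\sum_{s\ge1}(P_s-P_{s-1})$ telescopes to $1-P_0$, since $P_s\to1$ $q$-adically) and adding the $s=0$ term $P_0$, we get
\[
F(\omega,q)=1+\omega\, q\,G(q),\qquad G(q):=\sum_{s\ge0}q^{s}P_s(\omega).
\]
The theorem then reduces to the claim that $G(q)$ is supported on the triangular numbers $T_r$, presumably with bounded (unit) coefficients. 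Combinatorially, $G$ counts pairs $(s,\nu)$, where $\nu$ is a partition into parts $>s$ with multiplicities at most $2$, with weight $\omega^{\ell(\nu)}$ and size $s+|\nu|$. I would first compute $G$ to a few dozen terms to pin down the exact closed form of the claim, such as $\sum_r \epsilon_r q^{T_r}$ with $\epsilon_r$ a cube root of unity depending on $r\bmod 3$.

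The main obstacle is proving this false-theta-type identity for $G$. My first attempt would be a second telescoping. I would look for auxiliary series $H_s$ with $q^sP_s=H_s-H_{s+1}+(\text{boundary terms at }q^{T_r})$, generalizing the first step, and guess $H_s$ from a finite version $G_N=\sum_{s\le N}q^s\prod_{s<k\le N}(\cdots)$. I would then prove a closed form for $G_N$ by induction on $N$, using the recursion $G_N(z)=(1+zq^N+z^2q^{2N})G_{N-1}(z)+q^N$. This recursion is valid for general $z$, and $z=\omega$ is specialized only at the end, where $1+\omega x+\omega^2x^2=(1-x^3)/(1-\omega x)$ should produce cancellation.

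As a fallback, I would write $P_s(\omega)=\prod_{k>s}(1-q^{3k})/(1-\omega q^k)$ and expand it with the $q$-binomial theorem. I would then recognize $G$ as a specialization of a known Rogers–Fine or Andrews false theta identity, which naturally produces $\sum\pm q^{r(r+1)/2}$. A third route would be a Franklin-style sign-reversing involution on the pairs $(s,\nu)$, moving the smallest part or staircase pieces, with fixed points the staircases of size $T_r$.
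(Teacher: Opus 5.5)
Your reduction is correct. The root-of-unity filter, the symmetry $F(\bar\omega,q)=\overline{F(\omega,q)}$, and the telescoping via $(1-\omega x)(1+\omega x+\omega^2x^2)=1-x^3$ all check out, and together they give $F(\omega,q)=1+\omega q\,G(q)$. The gap is that the proposal stops exactly where the content of the theorem lies: the claim that $G(q)$ is supported on triangular numbers is only conjectured, never proved. None of your three routes is carried out, and none is clearly viable as stated:
\begin{itemize}
\item no candidate $H_s$ is given for the second telescoping;
\item a Franklin-type sign-reversing involution cannot act directly on weights $\omega^{\ell(\nu)}$, since no two such weights are negatives of each other, so cancellation would have to occur in triples;
\item your fallback form $P_s(\omega)=\prod_{k>s}(1-q^{3k})/(1-\omega q^k)$ mixes the bases $q^3$ and $q$ and does not evaluate cleanly.
\end{itemize}
Your guessed shape is also slightly off: the true coefficients are $(-\omega^2)^r$, which have period $6$ in $r$, not cube roots of unity depending on $r\bmod 3$.

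The missing idea is the other factorization, $1+\omega x+\omega^2x^2=(1-x)(1-\omega^2x)$, which gives
\[
P_s(\omega)=(q^{s+1};q)_\infty(\omega^2q^{s+1};q)_\infty=\frac{(q;q)_\infty}{(q;q)_s}\,(\omega^2q^{s+1};q)_\infty .
\]
Expand the last product by Euler's identity $(t;q)_\infty=\sum_{k\ge0}(-1)^kq^{k(k-1)/2}t^k/(q;q)_k$, interchange the sums, and use $\sum_{s\ge0}q^{(k+1)s}/(q;q)_s=1/(q^{k+1};q)_\infty$. The prefactor then collapses to $(q;q)_\infty/\bigl((q;q)_k(q^{k+1};q)_\infty\bigr)=1$, so
\[
G(q)=\sum_{k\ge0}(-1)^k\omega^{2k}q^{T_k},\qquad F(\omega,q)=1+\sum_{r\ge0}(-1)^r\omega^{2r+1}q^{T_r+1}.
\]
This is the paper's method, except that the paper applies it to $F$ directly, with $q^{3s}$ in place of $q^s$. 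That leaves factors $(1-q^{k+1})(1-q^{k+2})$ and requires a cancellation among three shifted sums. Once this step is supplied, your telescoping reduction makes the final computation slightly shorter than the paper's.
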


\subsection*{The partition map \texorpdfstring{$\iota_n$}{iota-n}}

To answer the Andrews-Dhar problem, thanks to Theorem~\ref{thm:thm1}, it suffices to construct  a bijection between $\C_3(n)$ and $\D_3^{(0)}(n)$.
Here we define the relevant map
\begin{equation}
\iota_n: \C_3(n) \longrightarrow \D_3^{(0)}(n)
\end{equation}
by composing some partition maps, which turns out to be the desired bijection.

First, let $\B_3^{(2)}(N)$ be the set of partitions of $N$ into positive parts not divisible by $3$ whose largest part is congruent to $2$ modulo $3$. In  particular, we have $\B_3^{(2)}(0)=\varnothing$.  The finite ``Glaisher map''
\begin{equation}
        \Gamma:\C_3(n)\longrightarrow \B_3^{(2)}(n-1)
\end{equation}
is defined as follows.  Given $\lambda\in\C_3(n)$, let its largest part be $3J$.  Replace one copy of this largest part by $3J-1$.  For every other original part $t$, write uniquely
\[
       t=3^a u,\qquad 3\nmid u,
\]
and replace that one part $t$ by $3^a$ copies of $u$.  After sorting, the result is $\Gamma(\lambda)$.  The special part \(3J-1\) is a largest part of the output and is congruent
to \(2\) modulo \(3\). Indeed, every other original part is at most \(3J\);
parts strictly smaller than \(3J\) become parts at most \(3J-1\), while any
remaining copies of \(3J\) are converted into parts at most \(J\).

Next we need another partition map.  It is the modulus $3$ specialization of Stockhofe's direct bijection between $m$-flat and $m$-regular partitions \cite{Stockhofe}.  The relevant facts about this map were previously derived by Stockhofe.  We give a self-contained proof of the special case needed here, so that the construction and its inverse can be checked directly without having to appeal to the original source.

A positive partition $\alpha=(\alpha_1,\ldots,\alpha_r)$ is called \emph{$3$-flat} if
\[
        \alpha_i-\alpha_{i+1}<3\quad(1\le i<r),
        \qquad\text{and}\qquad
        \alpha_r<3.
\]
Equivalently, after appending a final zero, all consecutive gaps are $0$, $1$, or $2$.  A partition is \emph{$3$-regular} if none of its parts is divisible by $3$.  The nonzero residue sequence $\res(\alpha)$ is obtained from $\alpha$ by deleting all parts divisible by $3$ and recording the residues modulo $3$ of the remaining parts, in order.  Let $\F^{(2)}(N)$ be the set of $3$-flat partitions of $N$ whose first nonzero residue is $2$; in particular, $\F^{(2)}(0)=\varnothing$.

For a sequence $v=(v_1,\ldots,v_k)$ with $v_i\in\{1,2\}$, its residue core $\Lambda(v)$ is the unique $3$-flat, $3$-regular partition whose $i$th part is congruent to $v_i$ modulo $3$.  It is constructed from right to left: set $c_k=v_k$, and, once $c_{i+1}$ is known, choose $c_i$ to be the unique number among $c_{i+1},c_{i+1}+1,c_{i+1}+2$ congruent to $v_i$ modulo $3$.  Then $\Lambda(v)=(c_1,\ldots,c_k)$.  For the empty sequence, $\Lambda(\varnothing)$ is the empty partition.

A part $A_i$ of a $3$-flat partition $A=(A_1,\ldots,A_r)$ is \emph{flat-removable} if $A_i$ is divisible by $3$ and deleting that one part leaves a $3$-flat partition.  The next map removes the multiples of $3$ from a $3$-flat partition and records the lost information in an ordinary partition.  Flat-removable multiples can simply be deleted.  The remaining multiples require a compensated deletion, in which the larger parts are lowered by $3$ so that $3$-flatness is preserved.

We define the map
\begin{equation}
       \Phi_3:\{3\text{-flat partitions of }N\}
       \longrightarrow
       \{3\text{-regular partitions of }N\}
\end{equation}
with the following  algorithm.
\begin{enumerate}[label=\textbf{S\arabic*.},leftmargin=3.1em]
\item Start with $A=\alpha$ and an empty record list $\nu_{\rm rec}$.
\item Scan $A$ from smallest part to largest part.  Whenever the current part is flat-removable, delete it and append one third of its value to $\nu_{\rm rec}$.  The scan is dynamic: after a deletion, continue with the next part above the deleted part.
\item Scan the remaining partition from largest part to smallest part.  If the current part is $A_i=3a$, delete it, subtract $3$ from each of the $i-1$ larger parts, and append $a+i-1$ to $\nu_{\rm rec}$.
\item When all multiples of $3$ have been removed, the remaining partition is the residue core $\Lambda(\res(\alpha))$.  Sort the record list $\nu_{\rm rec}$ into a partition $\nu$, and set
\[
        \Phi_3(\alpha):=\Lambda(\res(\alpha))+3\nu',
\]
where the sum is componentwise and missing parts are treated as zero.
\end{enumerate}

The inverse to $\Phi_3$ is equally explicit.  Given a $3$-regular partition $\beta$, compute its residue sequence $v=\res(\beta)$ and its residue core $A=\Lambda(v)$.  The partitions $\beta$ and $A$ have the same length and the same ordered nonzero residue sequence; in particular, $A_i\equiv\beta_i\pmod3$ for every $i$.  Put
\[
        q_i:=\frac{\beta_i-A_i}{3}
\]
for every index $i$.  Lemma~\ref{lem:extract} proves that the list $q=(q_1,q_2,\ldots)$ is a partition; set $\nu:=q'$.  Now read the parts of $\nu$ from largest to smallest.  For a current part $p$, perform one of the following insertions.
\begin{itemize}[leftmargin=2.4em]
\item A \emph{hard insertion of size $p$} chooses an integer $h$ with $0\le h\le p-1$, not exceeding the current length, adds $3$ to the first $h$ parts of the current partition, and inserts a new part $3(p-h)$ immediately after those $h$ larger parts.  It is admissible if the result is $3$-flat and the inserted multiple of $3$ is not flat-removable.
\item An \emph{easy insertion of size $p$} inserts a new part $3p$ in weakly decreasing order.  It is admissible if the result is $3$-flat and the inserted part is flat-removable.
\end{itemize}
Lemmas~\ref{lem:exist-insertion}--\ref{lem:well-defined-inverse} prove that this reverse step is always available in the present situation and is unique: if an admissible hard insertion exists, it is the required move; otherwise the unique admissible move is the easy insertion.  After all parts of $\nu$ have been processed, the resulting $3$-flat partition is $\Phi_3^{-1}(\beta)$.  We shall verify that these two algorithms are inverse to each other.

The last operation is a smallest-part raising map
\[
        \mathcal T:\R(N)\longrightarrow \D_3^{(0)}(N+1),
\]
where $\R(N)$ is the set of positive partitions $\sigma$ of $N$ satisfying the following two conditions:
\begin{enumerate}[label=(\roman*),leftmargin=2.6em]
\item no part of $\sigma$ occurs three or more times;
\item either $\ell(\sigma)\equiv2\pmod3$, or $\ell(\sigma)\equiv0\pmod3$ and the smallest part of $\sigma$ is unique.
\end{enumerate}
The second alternative presupposes that $\sigma$ is nonempty; in particular, this convention gives $\R(0)=\varnothing$.
For $\sigma\in\R(N)$, define $\mathcal T(\sigma)$ as follows.  If $\ell(\sigma)\equiv2\pmod3$, adjoin a new part $1$.  If $\ell(\sigma)\equiv0\pmod3$, raise the unique smallest part by $1$.  Call the resulting positive partition $\eta$.  If the smallest part of $\eta$ occurs exactly three times, set $\mathcal T(\sigma)=\eta$.  Otherwise, we set
\[
        \mathcal T(\sigma)=\eta\cup(0,0,0).
\]
Therefore, the only zeros ever created by the forward map are exactly three zeros, and they are created only in the final step.

We may now write the bijection in one line.  For $\lambda\in\C_3(n)$, define
\begin{equation}
        \boxed{\;
        \iota_n(\lambda)
        :=\mathcal T\left(\left(\Phi_3^{-1}(\Gamma(\lambda))\right)'\right)
        \;} .
\end{equation}
Equivalently: apply the finite Glaisher map, undo $\Phi_3$, conjugate, and then apply the smallest-part raising map.

Theorem~\ref{thm:thm1} explains why the three residue classes in $\D_3(n)$ have the same cardinality in the nonexceptional cases.  We now prove the main combinatorial refinement: $\C_3(n)$ is naturally bijective with the residue-zero class.

\begin{theorem}[The bijection]\label{thm:thm2}
For every $n\ge1$, the map just defined is a bijection
\[
        \iota_n:\C_3(n)\longrightarrow \D_3^{(0)}(n).
\]
In particular, if $n$ is not one more than a triangular number, then Theorem~\ref{thm:thm1} gives
\[
        |\C_3(n)|=|\D_3^{(0)}(n)|=|\D_3^{(1)}(n)|=|\D_3^{(2)}(n)|=\frac{D_3(n)}3.
\]
\end{theorem}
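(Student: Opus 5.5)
The plan is to show that each of the four maps in the composition is a bijection between the indicated sets, with each map's target equal to the next map's source:
\[
\C_3(n)\xrightarrow{\;\Gamma\;}\B_3^{(2)}(n-1)\xrightarrow{\;\Phi_3^{-1}\;}\F^{(2)}(n-1)\xrightarrow{\;\alpha\mapsto\alpha'\;}\R(n-1)\xrightarrow{\;\mathcal T\;}\D_3^{(0)}(n).
\]
The cardinality statement then follows at once from Theorem~\ref{thm:thm1}.

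\emph{Step 1: $\Gamma$ is a bijection.} I will write down the inverse. Given $\beta\in\B_3^{(2)}(N)$ with largest part $3J-1$:
\begin{itemize}
\item remove one copy of $3J-1$, and record the part $3J$;
\item among the remaining parts, for each $u\le J$ (with $3\nmid u$), write the multiplicity of $u$ in base $3$ and merge the copies accordingly, exactly as in Glaisher's classical bijection; parts $u>J$ are left untouched.
\end{itemize}
Both maps preserve the weight (the special part accounts for the shift $n\mapsto n-1$).

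The content of this step is that condition (ii) of $\C_3$ matches this truncation. Parts of $\lambda$ that are at most $J$ occur at most twice, so they contribute base-$3$ digits. Parts of $\lambda$ greater than $J$ are at most $3J$. Such a part $3^a u$ is unfolded into copies of $u$, and one checks that $u$ either exceeds $J$ (when $a=0$) or that the block $3^a$ of copies is uniquely recoverable. To make this rigorous, I would verify that the greedy base-$3$ regrouping of the multiplicity of each $u\le J$ reproduces the original parts, using that every merged part $3^a u$ must be $\le 3J$. This check is routine but fiddly.

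\emph{Step 2: $\Phi_3$ restricts to a bijection $\F^{(2)}(N)\to\B_3^{(2)}(N)$.} This is the Stockhofe specialization. I would first invoke the lemmas cited in the construction (Lemma~\ref{lem:extract} and Lemmas~\ref{lem:exist-insertion}--\ref{lem:well-defined-inverse}), which show that the forward and inverse algorithms are well defined and mutually inverse. Then I would observe that $\Phi_3$ preserves $\res$: the residue core $\Lambda(\res(\alpha))$ carries the residues, and adding $3\nu'$ does not change them. So a $3$-flat partition has first nonzero residue $2$ if and only if its image, being $3$-regular, has largest part $\equiv2\pmod 3$.

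\emph{Step 3: conjugation is a bijection $\F^{(2)}(N)\to\R(N)$.} The proof is a direct translation of the two definitions.
\begin{itemize}
\item The gap conditions $\alpha_i-\alpha_{i+1}\le2$ and $\alpha_r\le 2$ say exactly that every part of $\alpha'$ occurs at most twice.
\item We have $\ell(\alpha')=\alpha_1$. The multiplicity of the smallest part of $\alpha'$ equals the gap between $\alpha_1$ and the next distinct value of $\alpha$ (or $0$ if there is none).
\end{itemize}
Now split on $\alpha_1 \bmod 3$.
\begin{itemize}
\item If $\alpha_1\equiv2$, the first nonzero residue is $2$ and $\ell(\alpha')\equiv2$.
\item If $\alpha_1\equiv1$, the first nonzero residue is $1$, and this case is excluded on both sides.
\item If $\alpha_1\equiv0$, flatness forces the next distinct value to be $\alpha_1-1$ or $\alpha_1-2$. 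The first nonzero residue is $2$ exactly in the gap-$1$ case, and this is exactly the case in which the smallest part of $\alpha'$ is unique.
\end{itemize}

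\emph{Step 4: $\mathcal T$ is a bijection.} First I check that $\mathcal T(\sigma)$ lies in $\D_3^{(0)}(N+1)$.
\begin{itemize}
\item Parts still repeat at most twice, except possibly the new smallest part.
\item If the smallest part of $\eta$ occurs three times, then $\tau=\ell(\eta)-3\equiv0$. This uses $\ell(\eta)=\ell(\sigma)+1\equiv0$ in the first case, and $\ell(\eta)=\ell(\sigma)\equiv0$ in the second.
\item Otherwise three zeros are added and $\tau=\ell(\eta)\equiv0$.
\end{itemize}
For the inverse, given $\mu\in\D_3^{(0)}(N+1)$, delete zeros if there are any, giving $\eta$. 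Then undo the raising step:
\begin{itemize}
\item if $\ell(\eta)\equiv 0\pmod 3$ and $\eta$ contains a part $1$, delete one part $1$;
\item otherwise lower a smallest part by $1$.
\end{itemize}
I must check, by a short case analysis on the multiplicity of the smallest part of $\eta$, that this rule recovers which branch was used and yields an element of $\R(N)$. I expect this case analysis to be the main obstacle, since the two branches and the zero/no-zero dichotomy interact. In particular, I would need to confirm that the length residue modulo $3$ together with the presence of a part $1$ unambiguously separates the branches.
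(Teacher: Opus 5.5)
The gap is in Step 1. The inverse you actually write down (``write the multiplicity of $u$ in base $3$ and merge the copies accordingly, exactly as in Glaisher's classical bijection'') fails, because untruncated base-$3$ merging can create parts larger than $3J$. For instance, $\lambda=(3,3,3,3)\in\C_3(12)$ has $J=1$ and $\Gamma(\lambda)=(2,1^9)$. Your rule merges the nine $1$'s into a single $9$ and returns $(9,3)$. That is not $\lambda$, and its largest part is not $3J=3$; in fact $\Gamma(9,3)=(8,1,1,1)$.

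The missing idea is a mixed-radix expansion with an unbounded top digit. For $3\nmid u$, let $E_u:=\max\{e\ge0: u3^e\le 3J\}$. Write $M_u=Q_u3^{E_u}+\sum_{e<E_u}d_{u,e}3^e$ with $d_{u,e}\in\{0,1,2\}$ and $Q_u\ge0$ arbitrary. Then output $Q_u$ copies of $u3^{E_u}$ and $d_{u,e}$ copies of $u3^e$.

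What makes this match condition (ii) of $\C_3$ is that $J<u3^{E_u}\le 3J$, while $u3^e\le J$ for every $e<E_u$. So every part of $\lambda$ exceeding $J$, including the extra copies of $3J$, is the top bucket of its $u$ and may occur any number of times. Every lower bucket is at most $J$, hence occurs at most twice and is a genuine base-$3$ digit. Your later remark about a greedy regrouping in which ``every merged part $3^a u$ must be $\le 3J$'' points in this direction, but it contradicts the rule you stated. Your claim that the block of $3^a$ copies is ``uniquely recoverable'' is exactly this observation, and you leave it unproved. With this repair, which is Proposition~\ref{prop:gamma}, Step 1 goes through.

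Otherwise your route is the paper's: the same four factors through $\B_3^{(2)}(n-1)$, $\F^{(2)}(n-1)$ and $\R(n-1)$. Your Steps 2 and 3 match Proposition~\ref{prop:phi} and Lemma~\ref{lem:conj}. One citation correction: the fact that the two $\Phi_3$ algorithms are mutually inverse comes from Lemma~\ref{lem:compatibility} and Proposition~\ref{prop:phi}, not only from the well-definedness lemmas you cite.

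Step 4 is easier than you expect. For $\mu\in\D_3^{(0)}(N+1)$ one always has $\ell(\eta)\equiv0\pmod 3$, so your rule is just the paper's $L$: lower one smallest part of $\eta$ by $1$, deleting it if it becomes $0$. The branch of $\mathcal T$ is recovered from whether the smallest part of $\eta$ equals $1$, not from any length residue. The verification is the short four-case check of Lemma~\ref{lem:L}: zeros present or not, crossed with smallest part equal to $1$ or not.
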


\begin{example}[The complete case $n=6$]\label{ex:n6}
Write $0^3$ for the three-part string $(0,0,0)$.  The $C_3$-objects of weight $6$ are
\[
        \C_3(6)=\{(6),\ (3,3),\ (3,2,1)\}.
\]
The $D_3$-objects of weight $6$ are
\[
\begin{aligned}
\D_3(6)=\{& (6,0^3),\ (5,1,0^3),\ (4,2,0^3),\ (4,1,1,0^3),\ (3,3,0^3), \\
          & (3,2,1,0^3),\ (2,2,1,1,0^3),\ (3,1,1,1),\ (2,2,2)\}.
\end{aligned}
\]
Among these, the residue-zero condition $\tau(\mu)\equiv0\pmod3$ selects
\[
        \D_3^{(0)}(6)
        =\{(4,1,1,0^3),\ (3,2,1,0^3),\ (2,2,2)\}.
\]
We now apply the map to each element of $\C_3(6)$.  In the table,
\[
        \rho=\Gamma(\lambda),\qquad
        \alpha=\Phi_3^{-1}(\rho),\qquad
        \sigma=\alpha'.
\]
Then we have $\iota_6(\lambda)=\mathcal T(\sigma)$.
\[
\begin{array}{|c|c|c|c|c|}\hline
\lambda & \rho & \alpha & \sigma & \iota_6(\lambda) \\
\hline
(6)     & (5)         & (3,2)       & (2,2,1) & (2,2,2) \\
(3,3)   & (2,1,1,1)   & (2,1,1,1)   & (4,1)   & (4,1,1,0^3) \\
(3,2,1) & (2,2,1)     & (2,2,1)     & (3,2)   & (3,2,1,0^3)\\ \hline
\end{array}
\]

For instance, the first row uses
\[
        (6)\xrightarrow{\Gamma}(5),
        \qquad
        \Phi_3^{-1}(5)=(3,2),
        \qquad
        (3,2)'=(2,2,1),
\]
and since $(2,2,1)$ has length $3\equiv0\pmod3$ and has a unique smallest part, the final step raises the $1$ to $2$, giving $(2,2,2)$.  The second and third rows have length $2\equiv2\pmod3$ after conjugation, so the final step first adjoins a $1$; because the resulting smallest part does not occur exactly three times, it then appends exactly three zeros.  Therefore, the image of $\C_3(6)$ is precisely $\D_3^{(0)}(6)$.

\end{example}

\begin{example}[The image of $(9,4)$]
We now spell out the example mentioned above.  Let
\(
        \lambda=(9,4)\in\C_3(13).
\)
The finite Glaisher step replaces the distinguished largest part by \(8\), and
since the remaining part \(4\) is already not divisible by \(3\), it gives
\[
        \rho=\Gamma(9,4)=(8,4).
\]
The inverse Stockhofe step gives
\[
        \alpha=\Phi_3^{-1}(8,4)=(5,3,3,1).
\]
Indeed, the residue sequence is \((2,1)\), its residue core is \((2,1)\), and
\((8,4)-(2,1)=3(2,1)\), so \(q=(2,1)\) and \(q'=(2,1)\).  The reverse
insertion of size \(2\) into \((2,1)\) is the hard insertion after one larger
part, producing \((5,3,1)\).  The subsequent reverse insertion of size \(1\) is
the easy insertion of \(3\), producing \((5,3,3,1)\).  Therefore
\[
        \sigma=(5,3,3,1)'=(4,3,3,1,1).
\]
Since \(\ell(\sigma)=5\equiv2\pmod3\), the raising map adjoins a new part
\(1\).  The resulting smallest part occurs exactly three times, so no zeros are
added.  Hence
\[
        \iota_{13}(9,4)=\mathcal T(\sigma)=(4,3,3,1,1,1)
        \in\D_3^{(0)}(13),
\]
because the smallest part \(1\) occurs exactly three times, the larger parts
\(4,3,3\) have multiplicity at most two, and \(\tau=3\equiv0\pmod3\).
\end{example}

\begin{remark}
After the completion of this work, we learned of a related manuscript of Liu \cite{LiuGlaisherCompanion}. Liu studies a related positive-part companion problem in general modulus \(m\). In his notation, the relevant partitions are those in which the smallest part occurs at most \(m\) times and all larger parts occur fewer than \(m\) times; the residue statistic is the total number of parts modulo \(m\). He proves, for \(m=3\), a root-of-unity equidistribution theorem away from the same shifted triangular exceptional set, and also gives a sign-reversing involution proof. He further constructs, for every \(m\geq 2\), a Stockhofe-type bijection between the residue-zero subfamily and \(m\)-regular partitions whose largest part is congruent to \(m-1 \pmod m\). Combined with a bijection of Lin and Zang, this gives a combinatorial proof of the Andrews--Dhar identity.

The present paper addresses the Andrews--Dhar formulation directly. Our bijection is a direct map
\[
\iota_n:C_3(n)\longrightarrow D_3^{(0)}(n),
\]
where
\[
D_3^{(0)}(n)=\{\mu\in D_3(n):\tau(\mu)\equiv 0 \pmod 3\}.
\]
The two papers share a Stockhofe flat-to-regular mechanism, but they apply it to different partition models and use different residue statistics. Liu's result gives a broad all-moduli companion framework, whereas the present work gives a direct bijective answer to the Andrews--Dhar cubic problem as stated.
\end{remark}

\section{Proof of Theorem~\ref{thm:thm1}}\label{sec:proof}
Here we prove Theorem~\ref{thm:thm1} using standard generating function calculations.
We require  the standard \(q\)-Pochhammer notation
\[
(a;q)_0:=1,\qquad
(a;q)_N:=\prod_{j=0}^{N-1}(1-aq^j)\quad (N\geq 1),
\]
and
\[
(a;q)_\infty:=\prod_{j=0}^{\infty}(1-aq^j).
\]
Thus, for example, we have
\[
(q;q)_s=(1-q)(1-q^2)\cdots(1-q^s).
\]
All generating-function identities below are interpreted as identities of
formal power series.

\begin{proof}[Proof of Theorem~\ref{thm:thm1}]

Let $\omega:=e^{2\pi i/3}$, and introduce the bivariate generating function
\[
        F(z;q):=\sum_{n\ge0}\sum_{\mu\in\D_3(n)}z^{\tau(\mu)}q^n.
\]
If the smallest part is $s\ge0$, then the three smallest parts contribute $q^{3s}$, and every larger part $r>s$ may occur zero, one, or two times, contributing $1+zq^r+z^2q^{2r}$.  Hence
\[
        F(z;q)=\sum_{s\ge0}q^{3s}\prod_{r\ge s+1}(1+zq^r+z^2q^{2r}).
\]
At $z=\omega$, the elementary factorization
\[
        1+\omega x+\omega^2x^2=(1-x)(1-\omega^2x)
\]
gives
\[
        F(\omega;q)=\sum_{s\ge0}q^{3s}(q^{s+1};q)_\infty(\omega^2q^{s+1};q)_\infty
        =(q;q)_\infty\sum_{s\ge0}\frac{q^{3s}}{(q;q)_s}(\omega^2q^{s+1};q)_\infty.
\]
Using Euler's expansion, which is a consequence of the $q$-binomial theorem, we have
\[
        (t;q)_\infty=\sum_{k\ge0}\frac{(-1)^kq^{k(k-1)/2}t^k}{(q;q)_k}
\]
and then summing the resulting geometric $q$-series by
\[
        \sum_{s\ge0}\frac{q^{(k+3)s}}{(q;q)_s}=\frac1{(q^{k+3};q)_\infty},
\]
we obtain
\[
\begin{aligned}
        F(\omega;q)
        = (q;q)_\infty\sum_{k\ge0}
        \frac{(-1)^k\omega^{2k}q^{k(k+1)/2}}{(q;q)_k(q^{k+3};q)_\infty} = \sum_{k\ge0}(-1)^k\omega^{2k}q^{k(k+1)/2}(1-q^{k+1})(1-q^{k+2}).
\end{aligned}
\]
Expanding the last two factors and shifting indices yields
\[
\begin{aligned}
F(\omega;q)
&=\sum_{k\ge0}(-1)^k\omega^{2k}q^{k(k+1)/2}
 +(1+q)\sum_{k\ge1}(-1)^k\omega^{2k-2}q^{k(k+1)/2}  \\
&\hspace{3.5em}+\sum_{k\ge2}(-1)^k\omega^{2k-4}q^{k(k+1)/2}.
\end{aligned}
\]
For every $k\ge2$, the coefficient of $q^{k(k+1)/2}$ in the three displayed sums is
\[
        (-1)^k\omega^{2k-4}(\omega^4+\omega^2+1)=0.
\]
The triangular exponents $T_k$ with $k\ge2$ have therefore cancelled.  The only terms that survive, besides the constant term, are the $q\cdot q^{T_r}$ terms coming from the middle shifted sum.  Thus the remaining contribution from the factor $(1+q)$ is supported precisely at exponents one more than triangular numbers.  More explicitly,
\[
        F(\omega;q)=1+\sum_{r\ge0}(-1)^r\omega^{2r-2}q^{T_r+1}.
\]
The same calculation with $\omega$ replaced by $\omega^2$ gives
\[
        F(\omega^2;q)=1+\sum_{r\ge0}(-1)^r\omega^{r-1}q^{T_r+1},
\]
so both nontrivial cubic-root filters have zero coefficient in every positive degree that is not of the form $T_r+1$.

Now write $a_i(n):=D_3^{(i)}(n)$.  The coefficient of $q^n$ in $F(\omega;q)$ is
\[
        a_0(n)+\omega a_1(n)+\omega^2a_2(n),
\]
and the coefficient of $q^n$ in $F(\omega^2;q)$ is
\[
        a_0(n)+\omega^2 a_1(n)+\omega a_2(n).
\]
For nonexceptional $n$, both quantities vanish.  Therefore, we have that
\[
        a_i(n)=\frac13\left(D_3(n)+\omega^{-i}\cdot0+\omega^{-2i}\cdot0\right)=\frac{D_3(n)}3
\]
for $i=0,1,2$.  This proves the theorem.  
\end{proof}

\section{The map $\iota_n$}\label{iota-m}
Thanks to Theorem~\ref{thm:thm1}, the Andrews-Dhar problem is solved by the bijection in Theorem~\ref{thm:thm2}.
This bijection is assembled from four elementary-looking transformations.  The rest of the proof checks that each transformation has the exact source and target claimed in the introduction.
The map in question is given by  the  composition
\[
        \iota_n=\mathcal T\circ(\cdot)'\circ\Phi_3^{-1}\circ\Gamma.
\]
We prove that each factor is a bijection between the stated intermediate sets.

\subsection{The finite Glaisher step}
The first step is a finite version of Glaisher carrying in which one distinguished largest part is lowered by one.  The truncation at the largest part is the only point requiring a check.

For a fixed $J\ge1$, let $\C_{3,J}(n)$ be the subset of $\C_3(n)$ consisting of partitions whose largest part is $3J$.  Let $\B_{3,J}^{(2)}(n-1)$ be the subset of $\B_3^{(2)}(n-1)$ consisting of partitions whose largest part is exactly $3J-1$.

The next proposition records that this finite carrying procedure loses no information.

\begin{proposition}\label{prop:gamma}
The finite Glaisher map restricts to a bijection
\[
        \Gamma_J:\C_{3,J}(n)\longrightarrow\B_{3,J}^{(2)}(n-1).
\]
Consequently, we have
\[
        \Gamma:\C_3(n)\longrightarrow\B_3^{(2)}(n-1)
\]
is a bijection.
\end{proposition}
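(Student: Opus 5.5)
We fix $J\ge1$ and split $\lambda\in\C_{3,J}(n)$ into the distinguished copy of $3J$ and the remaining multiset $\lambda^-$. This multiset has weight $n-3J$ and parts at most $3J$. Its parts $\le J$ occur at most twice, and it may also contain further copies of $3J$ and parts in $(J,3J)$ with arbitrary multiplicity.

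On the other side, we write $\rho\in\B_{3,J}^{(2)}(n-1)$ as $\rho=(3J-1)\cup\rho^-$. Here $\rho^-$ is a $3$-regular partition of $n-3J$ with all parts $\le 3J-1$. Since both sides have the same weight, $\Gamma_J$ reduces to a map $\lambda^-\mapsto\rho^-$ given by the base-$3$ splitting $3^au\mapsto 3^a$ copies of $u$. We must show this restricted Glaisher map is a bijection between these two truncated families. Well-definedness, including the claim that $3J-1$ is the largest part of the output, was already observed in the introduction. So the work is injectivity and surjectivity.

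To get both at once, I would construct the inverse fibrewise over each $3$-regular core $u$. For a fixed $u$, the parts of $\lambda^-$ of the form $3^au$ are constrained as follows. Since the part must be at most $3J$, we need $a\le a_{\max}(u)$, where $a_{\max}(u)$ is the largest $a$ with $3^au\le 3J$. Every value $3^au$ with $a<a_{\max}(u)$ satisfies $3^au\le J$, because $3^{a+1}u\le 3J$. Such values therefore have multiplicity at most $2$. The top value $3^{a_{\max}}u$ is unrestricted: it is either $\le J$ with multiplicity at most $2$, or it lies in $(J,3J]$ with free multiplicity.

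I would check the borderline case separately. If $3^{a_{\max}}u\le J$, then $3^{a_{\max}+1}u\le 3J$, which contradicts maximality. So the top value always lies in $(J,3J]$ and always has free multiplicity. This holds even when the top value equals $3J$ (possible only for $u=J$ with $3\nmid J$, or for $u$ dividing $J$ appropriately). In that case it is consistent, because the extra copies of $3J$ in $\lambda^-$ are allowed.

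Given $m_u$ copies of $u$ in $\rho^-$, the preimage is then the unique mixed-radix representation
\[
m_u=\sum_{a<a_{\max}} d_a3^a + d_{\max}3^{a_{\max}},\qquad d_a\in\{0,1,2\},\ d_{\max}\ge0.
\]
Existence and uniqueness of this representation is just division with remainder. This gives a two-sided inverse, assembled over all $u\le 3J-1$ with $3\nmid u$. The constraint $u\le 3J-1$ on $\rho^-$ matches $3^0u\le 3J$ for $3$-regular $u$, because $3J$ itself is not $3$-regular.

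I expect the main obstacle to be exactly this bookkeeping: the multiplicity constraints of $\C_3$ at a single threshold $J$ must line up with the digit constraints for every $u$. The key inequality is the dichotomy $3^au\le J\iff a<a_{\max}(u)$. Finally, the global statement follows by taking the disjoint union over $J$. The sets $\C_{3,J}(n)$ partition $\C_3(n)$. The sets $\B_{3,J}^{(2)}(n-1)$ partition $\B_3^{(2)}(n-1)$, because every largest part $\equiv2\pmod 3$ is uniquely of the form $3J-1$ with $J\ge1$.
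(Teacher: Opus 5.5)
Your proposal is correct and is essentially the paper's proof: you split off the distinguished part $3J\leftrightarrow 3J-1$ and invert fibrewise over each $u$ with $3\nmid u$ using the base-$3$ expansion of the multiplicity of $u$, with the top bucket $3^{a_{\max}(u)}u$ (the paper's $u3^{E_u}$) given arbitrary multiplicity, and this rests on the same key inequality: the top bucket exceeds $J$ while every lower bucket is at most $J$. One small precision is that the top bucket equals $3J$ exactly when $u$ is the $3$-free part of $J$, though this case needs no separate treatment in your argument.
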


\begin{proof}
The construction of $\Gamma_J$ lowers the weight by $1$, since the distinguished part $3J$ is replaced by $3J-1$, while every other replacement preserves weight:
\[
        3^a u=\underbrace{u+u+\cdots+u}_{3^a\text{ copies}}.
\]
All produced parts are not divisible by $3$, and the distinguished part $3J-1$ is the largest part and is congruent to $2$ modulo $3$.  Hence $\Gamma_J$ lands in $\B_{3,J}^{(2)}(n-1)$.

We now construct the inverse.  Start with $\rho\in\B_{3,J}^{(2)}(n-1)$.  Replace one copy of the largest part $3J-1$ by $3J$.  For every $u$ with $3\nmid u$, let $M_u$ be the number of remaining copies of $u$, and put
\[
        E_u:=\max\{e\ge0:u3^e\le3J\}.
\]
Write $M_u$ uniquely as
\[
        M_u=Q_u3^{E_u}+\sum_{e=0}^{E_u-1}d_{u,e}3^e,
        \qquad Q_u\ge0,
        \qquad d_{u,e}\in\{0,1,2\}.
\]
Insert $Q_u$ copies of $u3^{E_u}$ and, for $0\le e<E_u$, insert $d_{u,e}$ copies of $u3^e$.  Do this independently for all $u$ not divisible by $3$, and sort.

The resulting partition has largest part $3J$.  It remains to check the multiplicity condition below $J$.  By the definition of $E_u$,
\[
        u3^{E_u}\le3J<u3^{E_u+1}.
\]
If $u3^{E_u}\le J$, then multiplying by $3$ gives $u3^{E_u+1}\le3J$, a contradiction.  Hence $u3^{E_u}>J$.  This top bucket may occur with arbitrary multiplicity $Q_u$.  Every lower bucket $u3^e$ with $e<E_u$ is at most $J$, and it occurs with multiplicity $d_{u,e}\in\{0,1,2\}$.  Thus every part at most $J$ occurs at most twice.

For each fixed $u$, the forward map replaces one copy of $u3^e$ by $3^e$ copies of $u$, and the inverse recovers the multiplicities by the displayed base-$3$ expansion with the top bucket separated.  The distinguished part is reversed by $3J\leftrightarrow 3J-1$.  Therefore the two maps are inverse bijections.
\end{proof}

\subsection{The raising map and its inverse}
The final step of the bijection changes a length congruence into a smallest-part condition.  The inverse is simple, but the proof below records why zeros occur only as an exact triple.

The map $\mathcal T$ was defined in the introduction.  Its inverse lowers one smallest part.

For $n\ge1$ and $\mu\in\D_3^{(0)}(n)$, define $L(\mu)$ as follows.  If the smallest part of $\mu$ is $0$, delete the three zero parts; if the smallest part is positive, do nothing.  Call the resulting positive partition $\eta$.  Let $s$ be the smallest part of $\eta$.  Lower one copy of $s$ to $s-1$, delete that new part if it is $0$, and sort.  The result is $L(\mu)$.

The following lemma proves both well-definedness and invertibility of the raising step.

\begin{lemma}\label{lem:L}
For $n\ge1$, the maps
\[
        L:\D_3^{(0)}(n)\longrightarrow\R(n-1),
        \qquad
        \mathcal T:\R(n-1)\longrightarrow\D_3^{(0)}(n)
\]
are inverse bijections.
\end{lemma}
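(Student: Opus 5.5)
We verify four things in turn: $\mathcal T$ lands in $\D_3^{(0)}(n)$, $L$ lands in $\R(n-1)$, $L\circ\mathcal T=\mathrm{id}$, and $\mathcal T\circ L=\mathrm{id}$. Weight bookkeeping is immediate in both directions: adjoining a $1$ or raising a part by $1$ adds one, zeros add nothing, and $L$ removes exactly one unit. Throughout we use the observation recorded after Definition~\ref{def:families}: a $\D_3$-object either has no zeros or exactly three zeros, and in the latter case every positive part occurs at most twice.

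\emph{The forward map lands in $\D_3^{(0)}(n)$.} Let $\sigma\in\R(n-1)$ and let $\eta$ be the intermediate positive partition.

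If $\ell(\sigma)\equiv2\pmod3$, adjoining a $1$ makes $\ell(\eta)\equiv0$. All multiplicities of $\eta$ are at most $2$, except that the multiplicity of $1$ may become $3$. If it is $3$, then $\eta\in\D_3$ with $\tau(\eta)=\ell(\eta)-3\equiv0$. Otherwise $1$ occurs once or twice, so every part of $\eta$ occurs at most twice. Then $\eta\cup(0,0,0)\in\D_3$, with $\tau=\ell(\eta)\equiv0$.

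If $\ell(\sigma)\equiv0\pmod3$, raising the unique smallest part $s$ to $s+1$ keeps the length. All multiplicities stay at most $2$, except that $s+1$ may now occur $3$ times. In that case $s+1$ is the smallest part of $\eta$, since $s$ was unique, and $\eta\in\D_3$ with $\tau\equiv0$. Otherwise all multiplicities are at most $2$ and we append three zeros, so $\tau=\ell(\eta)\equiv0$.

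The dichotomy ``smallest part of $\eta$ occurs exactly three times or not'' is exactly what $\mathcal T$ tests, so both cases are covered.

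\emph{The inverse map lands in $\R(n-1)$.} Let $\mu\in\D_3^{(0)}(n)$.

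If $\mu$ has zeros, then $\eta$ has all multiplicities at most $2$ and $\ell(\eta)=\tau(\mu)\equiv0$; since $n\ge1$, $\eta$ is nonempty. Lowering one smallest part $s$ to $s-1$ gives two cases.
\begin{itemize}
\item If $s=1$, the new part is deleted, so the length becomes $\equiv2$. Multiplicities stay at most $2$, so the result is in $\R$.
\item If $s>1$, the length stays $\equiv0$, and $s-1$ is a new unique smallest part. The multiplicity of $s$ drops, so the result is in $\R$.
\end{itemize}

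If $\mu$ has no zeros, its smallest part $s$ occurs exactly three times and $\ell(\mu)=\tau(\mu)+3\equiv0$. Lowering one copy leaves $s$ with multiplicity $2$. If $s=1$, the length drops to $\equiv2$. If $s\ge2$, the length stays $\equiv0$ with unique smallest part $s-1$. In both cases the result is in $\R$.

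\emph{Mutual inverses.} This is where the care lies: we must check that the branch taken by $\mathcal T$ on $L(\mu)$ matches how $\mu$ was formed. The length residue of $L(\mu)$ (2 versus 0) records whether $s=1$, that is, whether a part was deleted. Hence $\mathcal T$ correctly either re-adjoins the $1$ or raises the unique smallest part $s-1$ back to $s$, recovering $\eta$. The zero test also matches: $\mu$ had zeros exactly when the smallest part of $\eta$ does not occur three times, which is exactly the condition under which $\mathcal T$ appends $(0,0,0)$. So $\mathcal T(L(\mu))=\mu$.

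Conversely, starting from $\sigma\in\R(n-1)$, $L$ first strips the zeros, if any, recovering $\eta$. It then lowers a smallest part of $\eta$, which is exactly the part $\mathcal T$ created or raised: the new $1$, or the raised $s+1$. This part is minimal in $\eta$ because $s$ was the unique smallest part of $\sigma$. Hence $L(\mathcal T(\sigma))=\sigma$.

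The only delicate point is this uniqueness-of-smallest-part bookkeeping, which guarantees that the smallest part of $\eta$ really is the modified one. It is handled by the case split above.
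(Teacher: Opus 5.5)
Your proof is correct and takes essentially the same route as the paper. Both arguments check that $L$ lands in $\R(n-1)$ and that $\mathcal T$ lands in $\D_3^{(0)}(n)$, using the same case split on length modulo $3$ and on multiplicities, and then observe that the two maps reverse each other. If anything, you are more explicit than the paper in verifying both compositions, in particular that uniqueness of the smallest part of $\sigma$ makes the raised part the smallest part of $\eta$, and that the zero test in $\mathcal T$ matches exactly whether $\mu$ had zeros.
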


\begin{proof}
Let $\mu\in\D_3^{(0)}(n)$.  If the smallest part of $\mu$ is positive, then before lowering, the number of positive parts is $3+\tau(\mu)$.  If the smallest part is $0$, then after deleting the three zeros, the number of positive parts is $\tau(\mu)$.  In both cases this length is divisible by $3$, because $\tau(\mu)\equiv0\pmod3$.

Lowering one smallest positive part lowers the weight by $1$.  If the original smallest part was positive, then three copies of it existed before lowering; after lowering, only two copies remain at that old size, and the newly created smaller part is either deleted because it is zero or occurs once.  All larger parts already occurred at most twice.  If the original smallest part was zero, then, after the three zeros are deleted, all remaining positive parts already occur at most twice.  Hence no part of $L(\mu)$ occurs three or more times.

There are two length cases.  If the lowered part was $1$, then it became $0$ and was deleted; the length changes from $0\pmod3$ to $2\pmod3$.  If the lowered part was greater than $1$, the length remains $0\pmod3$, and the new smallest part is unique.  Therefore $L(\mu)\in\R(n-1)$.

Now start with $\sigma\in\R(n-1)$.  If $\ell(\sigma)\equiv2\pmod3$, the map $\mathcal T$ adjoins a $1$, reversing exactly the case in which a lowered $1$ had been deleted.  If $\ell(\sigma)\equiv0\pmod3$, then the smallest part is unique by definition of $\R$, and $\mathcal T$ raises that part by $1$, reversing exactly the case in which a positive smallest part had been lowered.  After this reversal, either the smallest positive part occurs exactly three times, or it does not.  Apart from the part just adjoined or raised, all positive multiplicities are unchanged from $\sigma$ and hence remain at most two; the only possible creation of a triple positive part is therefore at the new smallest positive part.  If that part occurs exactly three times, the output is already in $\D_3^{(0)}(n)$.  If it does not, the only possible original $D_3$-partition had smallest part $0$, so $\mathcal T$ appends exactly three zeros.  In both cases the number of parts above the smallest part is divisible by $3$, so the output lies in $\D_3^{(0)}(n)$.

The two descriptions are exact reversals, so $L$ and $\mathcal T$ are inverse bijections.
\end{proof}

\subsection{Conjugation}
Conjugation translates multiplicity bounds into flatness bounds.  It also converts the length congruence in $\R(N)$ into the first-residue condition defining $\F^{(2)}(N)$.

The next lemma is the standard conjugation dictionary specialized to the present residue condition.

\begin{lemma}\label{lem:conj}
Conjugation gives a bijection
\[
        \R(N)\longrightarrow\F^{(2)}(N),
        \qquad \sigma\longmapsto\sigma'.
\]
\end{lemma}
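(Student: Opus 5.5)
We plan to use the standard conjugation dictionary. Conjugation is an involution on partitions of $N$ that preserves weight, so the only work is to show that the conditions defining $\R(N)$ become exactly the conditions defining $\F^{(2)}(N)$. We will express every condition in terms of the multiplicities of $\sigma$. Write $m_j$ for the number of parts of $\sigma$ equal to $j$. The conjugate $\alpha=\sigma'$ then has
\[
\alpha_i-\alpha_{i+1}=m_i \quad (i\ge1),
\]
where we append zeros so that $\alpha_i=0$ once $i$ exceeds the largest part of $\sigma$. We also have $\alpha_1=\ell(\sigma)$.

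\textbf{Multiplicities versus flatness.} Condition (i) of $\R(N)$ says $m_j\le2$ for all $j$. Under the identity above, this is exactly the statement that every consecutive gap of $\alpha$, including the final gap $\alpha_r-0$, lies in $\{0,1,2\}$. That is precisely $3$-flatness. So conjugation restricts to a bijection from partitions with no part repeated three or more times onto $3$-flat partitions.

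\textbf{Residue condition versus the length alternative.} This is the step that needs care. Let $\sigma$ satisfy (i), so $\alpha$ is $3$-flat, with $\alpha_1=\ell(\sigma)$. We split on $\ell(\sigma)\bmod 3$.
\begin{itemize}
\item If $\ell(\sigma)\equiv2\pmod3$, then $\alpha_1\not\equiv0$ and its residue is $2$, so the first nonzero residue of $\alpha$ is $2$.
\item If $\ell(\sigma)\equiv1\pmod3$, the first nonzero residue is $1$, so $\alpha\notin\F^{(2)}(N)$. Such $\sigma$ are excluded from $\R(N)$, which is consistent.
\item If $\ell(\sigma)\equiv0\pmod3$, then $\alpha_1$ is divisible by $3$ and is deleted when forming $\res(\alpha)$. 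The first nonzero residue is then decided further down.
\end{itemize}

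The key observation for the last case uses flatness. The parts of $\alpha$ descend from $\alpha_1\equiv0$ toward $0$ in steps of size $0$, $1$ or $2$. So the first part not divisible by $3$ is reached by the first strictly positive step, and that step has size $1$ or $2$. Its residue is therefore $-1\equiv2$ if the step has size $1$, and $-2\equiv1$ if the step has size $2$.

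The first index $i$ with $\alpha_i>\alpha_{i+1}$ is the smallest part $j_0$ of $\sigma$, and the step size there is $m_{j_0}$. The step cannot be a jump to $0$ that skips past every nonzero residue, because $3$-flatness forces the final part of $\alpha$ to be less than $3$. Also $\alpha_1>0$ for nonempty $\sigma$, and the empty $\sigma$ is excluded on both sides by the conventions $\R(0)=\F^{(2)}(0)=\varnothing$.

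It follows that in this case the first nonzero residue is $2$ exactly when $m_{j_0}=1$, that is, exactly when the smallest part of $\sigma$ is unique. This matches the second alternative in (ii).

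\textbf{Conclusion.} Combining the two steps, $\sigma\in\R(N)$ if and only if $\sigma'\in\F^{(2)}(N)$. Since conjugation is an involution, it gives the claimed bijection. The only delicate point is the case $\ell(\sigma)\equiv0\pmod3$, where one must check that the first nonzero residue really is determined by the first descent.
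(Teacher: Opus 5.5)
Your proof is correct and follows essentially the same route as the paper. The dictionary $\alpha_i-\alpha_{i+1}=m_i$ turns condition (i) into $3$-flatness, and $\alpha_1=\ell(\sigma)$ settles the residue-$2$ case. When $\ell(\sigma)\equiv0\pmod3$, the first descent of $\alpha$ occurs at the smallest part of $\sigma$ and has size equal to its multiplicity, so the first nonzero residue is $2$ exactly when that multiplicity is $1$. The only difference is organizational: you prove a single biconditional, whereas the paper checks the forward and converse inclusions separately, deducing in the converse that the descent must equal $1$ from its residue.
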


\begin{proof}
Let $\sigma\in\R(N)$ and set $\alpha=\sigma'$.  In conjugate partitions, multiplicities in $\sigma$ become consecutive differences in $\alpha$.  Since no part of $\sigma$ occurs three or more times, every gap in $\alpha$, including the final gap down to zero, is $0$, $1$, or $2$.  Thus $\alpha$ is $3$-flat.

If $\ell(\sigma)\equiv2\pmod3$, then $\alpha_1=\ell(\sigma)\equiv2\pmod3$, so the first nonzero residue of $\alpha$ is $2$.  If $\ell(\sigma)\equiv0\pmod3$ and the smallest part of $\sigma$ is $s$ and is unique, then
\[
        \alpha_1=\cdots=\alpha_s=\ell(\sigma)\equiv0\pmod3.
\]
Because the smallest part is unique, there is at least one part of $\sigma$ strictly larger than $s$; otherwise the length would be $1$, not a positive multiple of $3$.  Hence the next column exists and has height
\[
        \alpha_{s+1}=\ell(\sigma)-1\equiv2\pmod3.
\]
Thus again the first nonzero residue is $2$.

Conversely, let $\alpha\in\F^{(2)}(N)$ and set $\sigma=\alpha'$.  Since $\alpha$ is $3$-flat, all multiplicities in $\sigma$ are at most two.  If $\alpha_1\equiv2\pmod3$, then $\ell(\sigma)=\alpha_1\equiv2\pmod3$, so $\sigma\in\R(N)$.  Otherwise, the initial parts of $\alpha$ are divisible by $3$ until the first nonzero residue $2$ appears.  Suppose this occurs after $s$ initial parts.  Then
\[
        \alpha_s\equiv0\pmod3,
        \qquad
        \alpha_{s+1}\equiv2\pmod3.
\]
Because $\alpha$ is $3$-flat, the difference $\alpha_s-\alpha_{s+1}$ is $1$ or $2$; its residue modulo $3$ is $1$, so it is exactly $1$. 
For \(j<s\), the consecutive parts \(\alpha_j\) and \(\alpha_{j+1}\) are both
divisible by \(3\) and differ by less than \(3\), hence they are equal.
 This difference is the multiplicity of the smallest part $s$ of $\sigma$.  Hence the smallest part of $\sigma$ is unique and $\ell(\sigma)=\alpha_1\equiv0\pmod3$.  Therefore $\sigma\in\R(N)$.
\end{proof}

\subsection{The map \texorpdfstring{$\Phi_3$}{Phi-3}}
This is the technical heart of the paper.  We isolate the modulus-three Stockhofe map, prove that the deletion and insertion algorithms are well-defined, and then prove directly that they are inverse to one another.

This subsection verifies facts about $\Phi_3$ stated in the introduction.  As
noted above, these are the modulus $3$ case of Stockhofe's bijection.  The proofs
are included here to keep the argument self-contained.

The residue core is the minimal flat partition with a prescribed residue pattern.  We first prove that it is well-defined and unique.

\begin{lemma}\label{lem:core}
For every sequence $v=(v_1,\ldots,v_k)$ with $v_i\in\{1,2\}$, the residue core $\Lambda(v)$ exists and is unique.
\end{lemma}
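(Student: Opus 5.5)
We prove existence and uniqueness together by downward induction on the index, showing that any $3$-flat, $3$-regular partition $c=(c_1,\ldots,c_k)$ with $c_i\equiv v_i\pmod 3$ for all $i$ must coincide with the right-to-left construction described in the introduction. The residue core has exactly $k$ parts, all positive; if $k=0$ the empty partition is the unique candidate and there is nothing to prove.

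\textbf{The last part.} $3$-flatness requires $c_k<3$, and $3$-regularity together with positivity gives $c_k\in\{1,2\}$. The congruence $c_k\equiv v_k\pmod3$ then forces $c_k=v_k$. This is exactly the initial step of the construction, so existence and uniqueness hold at the last index.

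\textbf{The inductive step.} Suppose $c_{i+1}$ has been determined uniquely. The partition must be weakly decreasing and $3$-flat, so
\[
c_{i+1}\le c_i\le c_{i+1}+2 .
\]
The three integers $c_{i+1},c_{i+1}+1,c_{i+1}+2$ form a complete residue system modulo $3$. Hence exactly one of them is congruent to $v_i$, and $c_i$ must be that integer. Since $v_i\in\{1,2\}$, this value is not divisible by $3$, which preserves $3$-regularity.

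\textbf{Verification.} The sequence produced this way is weakly decreasing, its consecutive gaps lie in $\{0,1,2\}$, its last part is less than $3$, no part is divisible by $3$, and its residues match $v$. So it is a $3$-flat, $3$-regular partition with the prescribed residues, which gives existence. The inductive step shows every such partition agrees with it index by index, which gives uniqueness.

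There is no real obstacle here. The only point needing care is to note explicitly that $3$-flatness includes the final condition $c_k<3$, since this is what pins down the base case; without it the last part would only be determined modulo $3$.
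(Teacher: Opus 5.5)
Your proof is correct and follows the paper's argument: existence by checking that the right-to-left construction yields a weakly decreasing, $3$-flat, $3$-regular sequence with the prescribed residues, and uniqueness because the last part is forced to be $v_k$ and each earlier part is forced by the complete residue system $c_{i+1},c_{i+1}+1,c_{i+1}+2$. Your explicit remark that the final condition $c_k<3$ is what fixes the base case is the same point the paper uses.
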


\begin{proof}
The construction given in the introduction produces a weakly decreasing sequence because each $c_i$ is chosen from $c_{i+1},c_{i+1}+1,c_{i+1}+2$.  The differences are less than $3$, the last part is $1$ or $2$, and the residues are prescribed.  Hence it is a $3$-flat, $3$-regular partition with residue sequence $v$.

For uniqueness, the last part must be $v_k$, since it is positive, less than $3$, and congruent to $v_k$ modulo $3$.  Once $c_{i+1}$ is known, the previous part must lie among $c_{i+1},c_{i+1}+1,c_{i+1}+2$ and have residue $v_i$ modulo $3$.  There is exactly one such number.  Thus the partition is forced.
\end{proof}

The forward algorithm begins by deleting exactly those multiples of $3$ whose deletion preserves flatness.  The next lemma gives a local criterion for this condition.

\begin{lemma}\label{lem:removable}
Let $A=(A_1,\ldots,A_r)$ be $3$-flat and suppose $A_i=3a$.  Then this
indexed occurrence of $A_i$ is flat-removable exactly in the following cases:
\begin{enumerate}[label=(\roman*),leftmargin=2.6em]
\item $i=1$;
\item $i>1$ and $A_{i-1}=A_i$;
\item $i<r$ and $A_i=A_{i+1}$;
\item $1<i<r$ and the inequalities $A_{i-1}>A_i>A_{i+1}$ hold, with
\[
        A_{i-1}=3a+j_1,
        \qquad
        A_{i+1}=3(a-1)+j_2,
        \qquad
        0<j_1<j_2<3.
\]
\end{enumerate}
Consequently, after all flat-removable multiples have been deleted, every
remaining multiple $3a$ has both neighbors, neither neighbor is equal to it, and
those neighbors have the form
\[
        A_{i-1}=3a+j_1,
        \qquad
        A_{i+1}=3(a-1)+j_2,
        \qquad
        0<j_2\le j_1<3.
\]
\end{lemma}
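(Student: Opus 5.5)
The plan is to reduce flat-removability to a purely local check.

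\textbf{Step 1: localize the effect of deletion.} Deleting the indexed part $A_i$ from a $3$-flat partition changes only one adjacency. The two gaps $A_{i-1}-A_i$ and $A_i-A_{i+1}$ merge into the single gap $A_{i-1}-A_{i+1}$. Here we use the convention $A_{r+1}=0$ and ignore the left gap when $i=1$. All other consecutive gaps, and the final gap to zero when $i<r$, are unchanged. Hence $A_i$ is flat-removable if and only if either $i=1$ or $A_{i-1}-A_{i+1}<3$. When $i=r$, the condition reads $A_{r-1}<3$.

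\textbf{Step 2: first two cases.} If $i=1$, the deletion removes no constraint-creating merge, so the part is removable; this is case (i). For $i>1$, write $A_{i-1}=3a+j_1$ and $A_{i+1}=3a-k$ with $0\le j_1\le 2$. When $i=r$ we have $A_{i+1}=0$, so $k=3a$; the flatness condition $A_r<3$ together with $3\mid A_r>0$ is impossible, so this subcase does not occur. The flatness conditions give $0\le k\le 2$, and the criterion becomes $j_1+k<3$.

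\textbf{Step 3: remaining case split.} If $j_1=0$, the criterion holds automatically; this is case (ii). If $k=0$, it also holds; this is case (iii). When $i=r$ and $k=0$, the part $A_{r+1}=0$ would force $A_r=0$, which is excluded. Otherwise $j_1,k\in\{1,2\}$, and the criterion $j_1+k<3$ forces $j_1=k=1$. Writing $A_{i+1}=3(a-1)+j_2$ with $j_2=3-k$, this is precisely $j_1=1,\ j_2=2$, that is, $0<j_1<j_2<3$; this is case (iv). Conversely, each of the listed cases satisfies the criterion, by the same arithmetic read backwards.

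\textbf{Step 4: the consequence.} For the consequence, observe that after S2 terminates, no remaining multiple of $3$ is flat-removable in the current partition. The negation of (i)–(iii) says the part has both neighbors (using that $i=r$ was excluded above) and that neither neighbor equals it. Both neighbors are therefore of the displayed form with $j_1,j_2\in\{1,2\}$. The negation of (iv) gives $j_1\ge j_2$, i.e.\ $0<j_2\le j_1<3$.

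\textbf{Main obstacle.} The only delicate point is the boundary bookkeeping at $i=r$ and $i=1$: the final gap to zero, and the fact that a multiple of $3$ can never be the last part. A secondary subtlety is the dynamic nature of the scan in S2. I would handle it by applying the criterion to the final partition directly: the scan only stops when no flat-removable multiple remains, and the criterion is intrinsic to that final partition.
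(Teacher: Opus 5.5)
Your proof is correct and follows essentially the same route as the paper. You localize the deletion to the single merged gap $A_{i-1}-A_{i+1}$: your unified criterion $j_1+k<3$ combines the paper's computation $3+j_1-j_2<3$ with its observation that an equal neighbor reproduces an old gap. You also rule out $i=r$ because a positive multiple of $3$ cannot be the last part, and you obtain the consequence by negating (i)--(iv), as the paper does.

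One small correction to your Step 4: Stage S2 is a single upward pass, not a loop that "only stops when no flat-removable multiple remains." The conclusion still holds, and the lemma's hypothesis and the paper's proof both take it for granted. A multiple skipped as non-removable has a nonmultiple upper neighbor, so deletions made above it later cannot change its neighborhood.
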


\begin{proof}
Deleting $A_i$ can only create a new violation at the place where the part above
$A_i$ becomes adjacent to the part below $A_i$; all other consecutive gaps are
unchanged.  If $i=1$, there is no upper neighbor, so deletion cannot create a
new upper-lower gap.  If $i>1$ and $A_{i-1}=A_i$, then after deletion the upper
neighbor is equal to the deleted part, and the new gap is one of the old gaps.
Similarly, if $i<r$ and $A_i=A_{i+1}$, then the lower neighbor is equal to the
deleted part, and the new gap is again one of the old gaps.  These three cases
are therefore flat-removable.

It remains to consider the strict interior case
$A_{i-1}>A_i>A_{i+1}$.  Since $A$ is $3$-flat and $A_i=3a$, the neighboring
parts must be
\[
        A_{i-1}=3a+j_1,
        \qquad
        A_{i+1}=3(a-1)+j_2,
        \qquad j_1,j_2\in\{1,2\}.
\]
After deleting $3a$, the new gap is
\[
        (3a+j_1)-\bigl(3(a-1)+j_2\bigr)=3+j_1-j_2.
\]
This is less than $3$ if and only if $j_1<j_2$.  This proves the characterization.

For the final assertion, let $3a$ be a multiple which remains after Stage S2.  It
cannot be the first part, and it cannot be equal to either neighbor, since those
are flat-removable cases.  It also cannot be the last part, because a positive
multiple of $3$ cannot be the last part of a $3$-flat partition: the final gap to
zero would be at least $3$.  Hence it is a strict interior multiple.  The
strict-interior deletion condition just proved must fail, so its neighboring
residues satisfy $j_2\le j_1$.
\end{proof}

The record list must fit over the residue core when the final partition $\Lambda(\res(\alpha))+3\nu'$ is formed.  The next bound supplies precisely this length control.

\begin{lemma}\label{lem:record-bound}
Let $\alpha$ be a $3$-flat partition, and let $k$ be the number of parts of
$\alpha$ not divisible by $3$.  Every entry recorded by the forward
$\Phi_3$-algorithm is at most $k$.  Consequently, if $\nu$ is the sorted record
partition, then $\ell(\nu')\le k$.
\end{lemma}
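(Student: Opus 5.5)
We track the number $k$ of non-multiples of $3$ through the algorithm. Neither stage deletes or changes the residue of a part not divisible by $3$: Stage S2 deletes only multiples of $3$, and Stage S3 deletes a multiple and subtracts $3$ from larger parts, which preserves residues. So throughout, the current partition has exactly $k$ parts not divisible by $3$, and we bound each recorded entry separately in the two stages.

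\emph{Stage S2.} When a flat-removable part $3a$ is deleted, the recorded entry is $a$. Since the current partition is $3$-flat and ends below $3$, the parts below $3a$ must descend to zero in steps of at most $2$. Hence between $3a$ and $0$ every multiple-of-$3$ level $3(a-1),\dots,3$ is crossed, and each crossing strictly below a multiple $3b$ passes through a part in the residue window $(3(b-1),3b)$, i.e. a non-multiple. More precisely, for each $b=1,\dots,a$, flatness forces at least one part in the open interval $(3(b-1),3b)$ among the parts below position $i$. Otherwise the gap from the last part $\ge 3b$ to the next part $\le 3(b-1)$ would be at least $3$. These $a$ parts are distinct and not divisible by $3$, so $a\le k$.

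\emph{Stage S3.} Here the recorded entry is $a+i-1$ for $A_i=3a$. By the same window argument, there are at least $a$ non-multiples below position $i$. We also claim that all $i-1$ larger parts are non-multiples. Stage S3 scans from largest to smallest, and after Stage S2 no multiple is flat-removable. Lemma~\ref{lem:removable} shows that each remaining multiple is strictly interior with non-multiple neighbours. We must check that the previous S3 deletions keep the larger parts free of multiples: larger multiples have already been deleted in the scan, and subtracting $3$ preserves residues. So positions $1,\dots,i-1$ hold non-multiples, and the count gives $a+(i-1)\le k$.

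The main obstacle is the window argument in S3 after compensated deletions. We must confirm that the partition remains $3$-flat after each compensated deletion, which is essentially the invariant that the neighbour pattern $j_2\le j_1$ from Lemma~\ref{lem:removable} guarantees: the new gap is $j_1-j_2+0<3$ once the upper part is lowered by $3$. We must also confirm that the $a$ windows below $3a$ are disjoint from the $i-1$ positions above. The latter holds because the windows lie strictly below $3a$, while the upper parts exceed $3a$.

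Finally, since every record entry is at most $k$, the largest part of $\nu$ is at most $k$. Hence $\ell(\nu')=\nu_1\le k$.
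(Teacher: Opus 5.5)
Your proof is correct and follows essentially the same route as the paper: the flatness ``window'' observation gives at least $a$ nonmultiples below a part $3a$, which bounds the Stage S2 records, and in Stage S3 the $i-1$ larger parts are nonmultiples (by scan order and residue preservation), giving $a+(i-1)\le k$ and hence $\ell(\nu')=\nu_1\le k$. You also check explicitly that compensated deletions preserve $3$-flatness, which is needed to apply the window argument in the middle of Stage S3; the paper's proof uses this implicitly and establishes it separately in Lemma~\ref{lem:phi-forward-well}.
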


\begin{proof}
We use the following elementary consequence of $3$-flatness.  If a $3$-flat
partition contains a part at least $3a$, then for each
$b=0,1,\ldots,a-1$ it contains at least one part in the interval
$\{3b+1,3b+2\}$.  Otherwise the partition would have to pass from a part at
least $3b+3$ to a part at most $3b$ in one consecutive step, or in the final
step to zero, producing a gap at least $3$.  Thus the tail from level $3a$ down
to zero contains at least $a$ nonmultiples of $3$.

A Stage S2 deletion of a flat-removable part $3a$ records $a$, so the preceding
observation gives $a\le k$.

Now consider a Stage S3 deletion.  At the moment it is made, suppose the current
part is $3a$ and there are $h$ larger parts.  The scan in Stage S3 is from
largest to smallest, so all larger multiples of $3$ have already been removed;
subtracting $3$ from a nonmultiple part never makes it divisible by $3$.  Hence
these $h$ larger parts are all nonmultiples of $3$.  The tail beginning with the
current part $3a$ contains at least $a$ further nonmultiples by the observation
above.  Therefore the total number $k$ of nonmultiples is at least $h+a$.  The
recorded value in Stage S3 is exactly $a+h$, so it is also at most $k$.

Since the largest part of $\nu$ is at most $k$, the conjugate partition $\nu'$ has
length at most $k$.
\end{proof}
We now check that the deletion algorithm really produces a $3$-regular partition and preserves the relevant data.

\begin{lemma}\label{lem:phi-forward-well}
The forward algorithm is well-defined.  It sends a $3$-flat partition $\alpha$ to a $3$-regular partition of the same weight and preserves the ordered nonzero residue sequence.
\end{lemma}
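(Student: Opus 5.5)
The plan is to track two invariants through the algorithm: $3$-flatness of the working partition $A$, and the total weight of $A$ plus $3$ times the sum of the record list. Stage S2 preserves flatness by the definition of flat-removable. It preserves the weight invariant because deleting $3a$ from $A$ and recording $a$ leaves weight$(A)+3\sum\nu_{\rm rec}$ unchanged.

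For Stage S3, let the current part be $A_i=3a$. By the scan order it has $i-1$ larger parts, all nonmultiples of $3$ (as noted in the proof of Lemma~\ref{lem:record-bound}). We delete it and lower those $i-1$ parts by $3$. The weight drops by $3a+3(i-1)$, and we record $a+i-1$, so the invariant holds.

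We must also check that the lowered parts remain positive and that the result is still $3$-flat. I will show by induction over Stage S3 that, at each step, every remaining multiple of $3$ is a strict interior multiple with neighbors as in the final assertion of Lemma~\ref{lem:removable}, that is, $A_{i-1}=3a+j_1$ and $A_{i+1}=3(a-1)+j_2$ with $j_2\le j_1$. Granting this, the gaps among the lowered parts $A_1-3,\dots,A_{i-1}-3$ are unchanged. The new junction gap is
\[
(3a+j_1-3)-(3a-3+j_2)=j_1-j_2\in\{0,1\}\subset[0,3),
\]
and it is nonnegative, so the sequence stays weakly decreasing. All lowered parts are at least $3a+j_1-3\ge j_1>0$. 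Flatness of all other gaps is untouched, so the new $A$ is $3$-flat.

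For the induction, note that in Stage S3 the only remaining multiples lie below the current index. Their neighborhoods are unaffected, except possibly the lower neighbor of the deleted part. That neighbor becomes adjacent to $A_{i-1}-3$, but it is a nonmultiple, so nothing about the multiples below changes. A subtlety is that a multiple below could now become flat-removable. This is harmless, since Stage S3 only needs the neighbor inequality $j_2\le j_1$, and that inequality is inherited. The main obstacle is verifying the form of the upper neighbor at later steps. After lowering, the upper neighbor of the next multiple $3b$ may be a previously lowered part. I would argue that at each S3 step the part immediately above the current multiple is an original, unlowered part. Lowering only affects parts above the deleted one, and the next multiple lies strictly below the deleted position, with at least one nonmultiple, the old $A_{i+1}$, between them. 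Hence the inherited Lemma~\ref{lem:removable} description applies.

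When no multiples of $3$ remain, $A$ is $3$-flat and $3$-regular. Its residue sequence equals $\res(\alpha)$, because deletions remove only multiples of $3$ and subtracting $3$ preserves residues and relative order. By Lemma~\ref{lem:core}, $A=\Lambda(\res(\alpha))$. Finally, Lemma~\ref{lem:record-bound} gives $\ell(\nu')\le k=\ell(\Lambda)$, so $\Lambda+3\nu'$ only adds multiples of $3$ to existing parts. The result is therefore a weakly decreasing $3$-regular partition with the same residue sequence. Its weight is weight$(\Lambda)+3|\nu|=|\alpha|$ by the invariant.
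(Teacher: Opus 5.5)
Your proof is correct and follows the paper's own argument. Lemma~\ref{lem:removable} gives the Stage S3 junction gap $j_1-j_2\in\{0,1\}$, Lemma~\ref{lem:core} identifies the terminal partition as $\Lambda(\res(\alpha))$, Lemma~\ref{lem:record-bound} bounds $\ell(\nu')$, and the weight balances deletion by deletion. Your induction showing that the neighbors of later multiples are never altered during Stage S3, together with your positivity check, makes explicit what the paper leaves implicit; and because those neighbors are unchanged, your worry that a lower multiple might become flat-removable cannot actually arise.
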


\begin{proof}
Stage S2 of the algorithm is well-defined by the definition of flat-removable: every deletion made in that stage leaves the partition $3$-flat.

For Stage S3, take a remaining multiple $A_i=3a$.  By Lemma~\ref{lem:removable}, it has both neighbors and is locally between
\[
        A_{i-1}=3a+j_1,
        \qquad
        A_{i+1}=3(a-1)+j_2,
        \qquad
        0<j_2\le j_1<3.
\]
When we delete $A_i$ and subtract $3$ from all larger parts, all gaps among the larger parts remain unchanged.  The only new gap to check is the gap between the lowered predecessor and the successor:
\[
        (3a+j_1-3)-\bigl(3(a-1)+j_2\bigr)=j_1-j_2,
\]
which is $0$ or $1$.  Therefore, Stage S3 also preserves $3$-flatness.  At the end of the deletion stages no multiples of $3$ remain; the temporary partition is $3$-flat, $3$-regular, and has the same nonzero residue sequence as $\alpha$.  By Lemma~\ref{lem:core}, it is $\Lambda(\res(\alpha))$.

Let $k$ be the length of this residue core.  Lemma~\ref{lem:record-bound} gives $\ell(\nu')\le k$.  Hence the final componentwise sum
\[
        \Lambda(\res(\alpha))+3\nu'
\]
has no extra parts appended beyond the residue core.  Each of its $k$ parts is congruent to the corresponding nonzero residue-core part modulo $3$, so every part remains nondivisible by $3$.  Thus the forward algorithm lands in the set of $3$-regular partitions.

Only multiples of $3$ are deleted, and subtracting or adding $3$ to existing nonmultiples does not change their residues.  Hence the ordered nonzero residue sequence is preserved.

Finally, we check the weight.  If Stage S2 deletes $3a$, it records $a$, and the final term $3\nu'$ restores weight $3a$.  If Stage S3 deletes $3a$ in position $i$, then it also subtracts $3$ from the $i-1$ larger parts, so the temporary partition loses
\[
        3a+3(i-1)=3(a+i-1).
\]
The recorded value is $a+i-1$, so the final term $3\nu'$ restores exactly that weight.  Therefore the output has the same weight as $\alpha$.
\end{proof}

For the inverse algorithm, the first concern is that the componentwise difference between a $3$-regular partition and its residue core is a genuine partition after division by $3$.

\begin{lemma}\label{lem:extract}
In the $\Phi_3^{-1}$ algorithm, the quotient partition $q$ extracted from a $3$-regular partition $\beta$ is a partition.
\end{lemma}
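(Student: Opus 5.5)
We must show that $q_i=(\beta_i-A_i)/3$ is weakly decreasing, nonnegative, and consists of integers. Here $A=\Lambda(\res(\beta))$, and $A$ has the same length $k$ as $\beta$ (the residue sequence of a $3$-regular partition records every part).

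\emph{Integrality.} By construction $A_i\equiv\beta_i\pmod 3$, so each $q_i$ is an integer.

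\emph{Monotonicity.} This is the key step: show $q_i\ge q_{i+1}$ for $1\le i<k$. Write $\beta_i-\beta_{i+1}=d\ge0$ and $A_i-A_{i+1}=e\in\{0,1,2\}$, using the right-to-left construction of Lemma~\ref{lem:core}. Since $\beta_i\equiv A_i$ and $\beta_{i+1}\equiv A_{i+1}\pmod 3$, we get $d\equiv e\pmod 3$. Then $3(q_i-q_{i+1})=d-e$, and we need $d\ge e$. Because $d\equiv e\pmod3$, $d\ge0$ and $0\le e\le 2$, the smallest possible value of $d$ is $e$ itself. More precisely, $e$ is the least nonnegative residue of $d$, namely $e=d\bmod 3$. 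This uses the fact that the core chooses $c_i$ as the \emph{smallest} admissible number among $c_{i+1},c_{i+1}+1,c_{i+1}+2$. Hence $d-e=3\lfloor d/3\rfloor\ge0$.

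\emph{Nonnegativity.} It suffices to check $q_k\ge0$, since monotonicity then gives the rest. The last part $\beta_k$ is positive and not divisible by $3$, and $A_k=v_k\in\{1,2\}$ is the least positive integer in the residue class of $\beta_k$. So $\beta_k\ge A_k$, i.e.\ $q_k\ge0$.

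Combining these, $q=(q_1,\dots,q_k)$ is a weakly decreasing list of nonnegative integers, hence a partition after discarding zeros. Its largest part is $q_1$ and its length is at most $k$, so $\nu=q'$ has largest part at most $k$. This matches the bound in Lemma~\ref{lem:record-bound} and will be needed later for the insertion steps.

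None of these steps is a real obstacle. The only subtlety is the identification $e=d\bmod3$ in the monotonicity step, which relies on the minimality of the choice in the core construction.
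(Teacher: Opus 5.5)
Your proof is correct and follows the paper's: integrality comes from matching residues, and monotonicity comes from the fact that $A_i-A_{i+1}\in\{0,1,2\}$ is the least residue of $\beta_i-\beta_{i+1}$. The only cosmetic difference is that the paper gets nonnegativity from the componentwise minimality of the core, whereas you use $q_k\ge 0$ plus monotonicity. Your route is exactly the paper's difference argument at $i=k$ with the convention $\beta_{k+1}=A_{k+1}=0$.
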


\begin{proof}
Let $A=\Lambda(\res(\beta))$.  Each $q_i=(\beta_i-A_i)/3$ is a nonnegative integer.  Indeed, $A$ is componentwise minimal among weakly decreasing positive sequences with the same ordered residue sequence: from the last part upward, each part is the least positive lift of the required residue which is at least the part below it.  Therefore $\beta_i\ge A_i$ and $\beta_i\equiv A_i\pmod3$ for every $i$.

To see that the quotients are weakly decreasing, set $\beta_{k+1}=A_{k+1}=0$, where $k$ is the length of $\beta$.  For $1\le i\le k$, both $\beta_i-\beta_{i+1}$ and $A_i-A_{i+1}$ have the same residue modulo $3$, and $A_i-A_{i+1}$ is the representative of that residue in $\{0,1,2\}$.  Since $\beta_i\ge\beta_{i+1}$, the difference
\[
        (\beta_i-\beta_{i+1})-(A_i-A_{i+1})
\]
is a nonnegative multiple of $3$.  Dividing by $3$ gives $q_i\ge q_{i+1}$.  Hence $q$ is weakly decreasing.
\end{proof}

For the next three lemmas, if \(A=(A_1,\ldots,A_r)\) is \(3\)-flat, put
\[
        h_i(A):=A_i+3i\qquad (1\le i\le r).
\]
Since \(A_i-A_{i+1}\in\{0,1,2\}\), the sequence
\[
        h_1(A),h_2(A),\ldots,h_r(A)
\]
is strictly increasing.  For a positive integer \(p\), call \(A\)
\emph{\(p\)-safe} if no \(h_i(A)\) belongs to
\[
        \{3,6,\ldots,3p\}.
\]

The next lemma says that a safe intermediate partition always admits a next reverse move of the required size.

\begin{lemma}[Existence of one reverse insertion]\label{lem:exist-insertion}
Let \(A=(A_1,\ldots,A_r)\) be a \(3\)-flat partition, and let \(k\) be the number
of parts of \(A\) which are not divisible by \(3\).  If \(1\le p\le k\) and
\(A\) is \(p\)-safe, then at least one admissible insertion of size \(p\) exists.
More precisely:
\begin{enumerate}[label=(\roman*),leftmargin=2.6em]
\item if \(h_i(A)<3p<h_{i+1}(A)\) for some \(i\), then the hard insertion
after the first \(i\) parts is admissible;
\item if \(3p<h_1(A)\), then the easy insertion of \(3p\) in weakly decreasing
order is admissible.
\end{enumerate}
\end{lemma}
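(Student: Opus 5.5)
The plan is to use the strictly increasing sequence $h_1(A)<h_2(A)<\cdots<h_r(A)$ to place $3p$ relative to it, and then check the two insertions by direct gap computations. I expect no real obstacle; the only delicate points are ruling out a third position for $3p$ and checking non-removability in the hard case.

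\textbf{Step 1: the two cases are exhaustive.} First I show that $3p$ falls into exactly one of the two situations (i) or (ii). Since $A$ is $p$-safe, $3p\ne h_i(A)$ for every $i$. So $3p$ lies below $h_1$, strictly between two consecutive values $h_i,h_{i+1}$, or above $h_r$. The last possibility is excluded: $r\ge k\ge p$ and $A_r\ge1$, so
\[
h_r(A)=A_r+3r\ge 3p+1>3p .
\]
This is where the hypothesis $p\le k$ enters.

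\textbf{Step 2: case (ii), the easy insertion.} Here $3p<h_1=A_1+3$, so $A_1\ge 3p-2$.
\begin{itemize}
\item If $3p>A_1$, then $3p$ is inserted as the new first part. The new gap $3p-A_1$ lies in $\{1,2\}$, and a first part is flat-removable by Lemma~\ref{lem:removable}(i).
\item Otherwise $3p\le A_1$. Let $A_j$ be the last part with $A_j\ge 3p$, and insert $3p$ right after it. The upper gap is $A_j-3p$. If $A_j>3p$, the next part $A_{j+1}$ (or the terminal $0$) is $<3p$. Flatness of $A$ gives $A_{j+1}\ge A_j-2\ge 3p-2$; the terminal zero cannot follow $A_j\ge3p$, so $A_{j+1}$ exists. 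Hence both new gaps are less than $3$.
\end{itemize}
In every subcase, deleting the inserted part returns the flat partition $A$, so the inserted part is flat-removable by definition. The insertion is therefore admissible.

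\textbf{Step 3: case (i), the hard insertion.} Here $h_i<3p<h_{i+1}$ and $i<r$. From $A_i+3i=h_i<3p$ we get $i\le p-1$, and $i\le r$ is the current length, so the hard insertion with $h=i$ is legal. Its output is
\[
(A_1+3,\ldots,A_i+3,\;3(p-i),\;A_{i+1},\ldots,A_r).
\]
Gaps among the raised parts, and among the untouched lower parts, are unchanged. Set
\[
j_1:=(A_i+3)-3(p-i)=h_i+3-3p,\qquad j_2:=h_{i+1}-3p .
\]
Then the gap above the inserted part is $j_1$, and the gap below it is
\[
3(p-i)-A_{i+1}=3-j_2 .
\]
\begin{itemize}
\item From $h_i\le 3p-1$ we get $j_1\le2$.
\item From $h_{i+1}\le h_i+3$ (because $A_i\ge A_{i+1}$) and $h_{i+1}>3p$, we get $j_1\ge j_2\ge1$.
\item From $j_2\ge1$ the lower gap is at most $2$, and from $j_2\le j_1\le 2$ it is at least $1$.
\end{itemize}
So the result is $3$-flat. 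The inserted part $3(p-i)$ has both neighbors, and both are strictly different from it. Writing the neighbors as $3(p-i)+j_1$ and $3(p-i-1)+j_2$, the inequality $0<j_2\le j_1<3$ is exactly the non-removable situation of Lemma~\ref{lem:removable}. So this hard insertion is admissible.
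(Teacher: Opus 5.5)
Your proof is correct and follows the paper's argument essentially step for step: the same exhaustiveness check via $h_r(A)\ge 3p+1$, the same gap computations around the inserted part in the hard case (your condition $j_2\le j_1$ is the paper's observation that deleting the inserted part would leave the gap $(A_i+3)-A_{i+1}\ge 3$), and the same two-subcase check of the easy insertion. The only thing left implicit is the trivial subcase $A_j=3p$ of the easy insertion, where the new upper gap is $0$ and the new lower gap is the old gap $A_j-A_{j+1}$.
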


\begin{proof}
Because \(p\le k\le r\) and \(A_r\ge1\), we have
\[
        h_r(A)=A_r+3r\ge 1+3r\ge 1+3p>3p.
\]
Since \(A\) is \(p\)-safe, no \(h_i(A)\) is equal to \(3p\).  Hence either
\(3p<h_1(A)\), or there is a unique index \(i\) with
\[
        h_i(A)<3p<h_{i+1}(A).
\]

Assume first that \(h_i(A)<3p<h_{i+1}(A)\).  Then \(i<r\), and
\(h_i(A)<3p\) implies \(i\le p-1\), so the hard insertion after the first
\(i\) parts is an allowed hard insertion.  Put
\[
        m:=3(p-i).
\]
The new partition is
\[
        B=(A_1+3,\ldots,A_i+3,\ m,\ A_{i+1},\ldots,A_r).
\]
The only gaps which are not inherited from \(A\) are the two gaps adjacent to
\(m\).  Since
\[
        h_{i+1}(A)-h_i(A)=3-(A_i-A_{i+1})\le3
\]
and \(h_i(A)<3p<h_{i+1}(A)\), we have
\[
        3p-2\le h_i(A)\le 3p-1,\qquad
        3p+1\le h_{i+1}(A)\le 3p+2.
\]
Therefore, we have
\[
        0< A_i+3-m=h_i(A)+3-3p<3
\]
and
\[
        0< m-A_{i+1}=3p+3-h_{i+1}(A)<3.
\]
Thus \(B\) is \(3\)-flat.  The inserted part \(m\) is not flat-removable, because
deleting it would put \(A_i+3\) next to \(A_{i+1}\), producing the gap
\[
        (A_i+3)-A_{i+1}=(A_i-A_{i+1})+3\ge3.
\]
So the hard insertion is admissible.

Now assume that \(3p<h_1(A)\).  Insert \(3p\) in weakly decreasing order, and
let \(t\) be the number of parts of \(A\) which are at least \(3p\).  Since
\(A_r<3\le3p\), we have \(t<r\).  If \(t>0\), then \(A_t\ge3p\) and
\(A_{t+1}\le3p\).  The \(3\)-flatness of \(A\) gives
\[
        A_t-3p<3,\qquad 3p-A_{t+1}<3,
\]
for otherwise the adjacent gap \(A_t-A_{t+1}\) would be at least \(3\).  If
\(t=0\), then all parts of \(A\) are below \(3p\), and
\(3p<h_1(A)=A_1+3\) gives \(3p-A_1<3\).  Hence the partition obtained by
inserting \(3p\) is \(3\)-flat.  Deleting the inserted part gives back the
original \(3\)-flat partition \(A\), so the inserted part is flat-removable.
Thus the easy insertion is admissible.
\end{proof}

Existence is not enough: the inverse algorithm must not require choices.  The following lemma proves the required uniqueness.

\begin{lemma}[Uniqueness of one reverse insertion]\label{lem:unique-insertion}
Let \(A=(A_1,\ldots,A_r)\) be a \(3\)-flat partition and let \(p\ge1\).
There is at most one admissible hard insertion of size \(p\).  If an admissible
hard insertion exists, then no easy insertion of size \(p\) is admissible.  The
easy insertion, when admissible, has a unique resulting partition.
\end{lemma}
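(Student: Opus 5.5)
The plan is to turn admissibility of a hard insertion into a condition on the increasing sequence \(h_j(A)=A_j+3j\), and then read off all three assertions from the strict monotonicity of that sequence.

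\textbf{Hard insertions.} Suppose a hard insertion after the first \(h\) parts is admissible. Write \(m=3(p-h)\ge3\) and
\[
        B=(A_1+3,\ldots,A_h+3,\ m,\ A_{h+1},\ldots,A_r).
\]
First I rule out the boundary positions, using Lemma~\ref{lem:removable}.
\begin{itemize}
\item If \(h=0\), then \(m\) is the first part of \(B\). It is flat-removable by case (i), so the insertion is not admissible.
\item If \(h=r\), then \(m\ge3\) is the last part of \(B\). The final gap to zero is then at least \(3\), so \(B\) is not \(3\)-flat.
\end{itemize}
Hence \(1\le h<r\), and \(m\) has both neighbours.

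Next I use the two non-removability clauses of Lemma~\ref{lem:removable}. Cases (ii) and (iii) force \(m\neq A_h+3\) and \(m\ne A_{h+1}\). Combined with \(3\)-flatness of \(B\), the two new gaps must lie in \(\{1,2\}\):
\[
        (A_h+3)-m\in\{1,2\},\qquad m-A_{h+1}\in\{1,2\}.
\]
Rewriting these in terms of \(h_j(A)\) gives
\[
        h_h(A)\in\{3p-2,\,3p-1\},\qquad h_{h+1}(A)\in\{3p+1,\,3p+2\}.
\]
In particular \(h_h(A)<3p<h_{h+1}(A)\). Because the sequence \(h_j(A)\) is strictly increasing, at most one index \(h\) can straddle \(3p\) in this way. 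So there is at most one admissible hard insertion.

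\textbf{Easy insertions.} The partition produced by an easy insertion is the multiset \(A\cup\{3p\}\) sorted into weakly decreasing order. It is therefore unique, which gives the last assertion.

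\textbf{Hard excludes easy.} Assume an admissible hard insertion exists at index \(i\ge1\), so \(h_i(A)<3p\). Monotonicity gives
\[
        h_j(A)\le h_i(A)\le 3p-1 \quad\text{for all } j\le i.
\]
In particular \(A_i\le 3p-1-3i<3p\), so the inserted \(3p\) lands above \(A_i\), at some position \(t<i\). Here \(t\) is the number of parts that are at least \(3p\). I show the gap just below the inserted part is always too large.
\begin{itemize}
\item If \(t=0\), that gap is \(3p-A_1=3p+3-h_1(A)\ge4\).
\item If \(t\ge1\), then \(t+1\le i\), so \(A_{t+1}=h_{t+1}(A)-3(t+1)\le 3p-1-3(t+1)\), and the gap \(3p-A_{t+1}\) is at least \(4\).
\end{itemize}
Either way the easy insertion is not \(3\)-flat, so it is not admissible.

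The step I expect to need the most care is the hard-insertion characterization. The point is to invoke every clause of Lemma~\ref{lem:removable}: the first-part case excludes \(h=0\), the equal-neighbour cases force the two new gaps to be strictly positive, and the final-gap condition excludes \(h=r\). Together these pin \(3p\) strictly between two consecutive values of \(h_j(A)\).
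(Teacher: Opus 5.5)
Your proof is correct and follows essentially the same route as the paper: the strictly increasing sequence $A_j+3j$, exclusion of the insertion positions $0$ and $r$, the forced strict inequalities $h_h(A)<3p<h_{h+1}(A)$ giving uniqueness, and a gap argument showing that an easy insertion of $3p$ would break $3$-flatness. The only cosmetic difference is that you fix the easy insertion's position by weak decrease (so $t<i$), whereas the paper does a case analysis over all positions. Your case $t\ge 1$ is actually vacuous, since $A_t\le 3p-1-3t<3p$, but your gap bound handles it correctly anyway.
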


\begin{proof}
We first prove uniqueness of a possible hard insertion.  Put
\[
        c_i:=A_i+3i \qquad (1\le i\le r).
\]
Since \(A\) is \(3\)-flat, the sequence \(c_1,c_2,\ldots,c_r\) is strictly
increasing:
\[
        c_{i+1}-c_i=3-(A_i-A_{i+1})\in\{1,2,3\}.
\]
Suppose that a hard insertion of size \(p\) after the first \(i\) parts is
admissible.  The case \(i=0\) cannot occur, because then the inserted multiple
of \(3\) is the largest part and is flat-removable.  The case \(i=r\) cannot
occur either, because the inserted part would be the smallest part and would be
a positive multiple of \(3\), contradicting the final \(3\)-flat condition
that the smallest part be less than \(3\).

Thus \(1\le i<r\).  Let the inserted part be \(m=3(p-i)\).  The two new
boundary gaps must both be \(0\), \(1\), or \(2\).  Since the inserted multiple
is required not to be flat-removable, neither boundary gap can be \(0\); hence
\[
        0< A_i+3-m<3,
        \qquad
        0< m-A_{i+1}<3.
\]
In particular, we have
\[
        c_i<3p<c_{i+1}.
\]
Because the sequence \(c_i\) is strictly increasing, there is at most one index
\(i\) with this property.  Hence there is at most one admissible hard insertion.

We next show that an admissible hard insertion excludes an easy insertion.  If
the hard insertion occurs after \(i\) parts, then \(c_i<3p<c_{i+1}\).  If an
easy insertion of \(3p\) were possible after \(h\) parts, then either \(h<i\)
or \(h\ge i\).  If \(h<i\) and \(h\ge1\), then \(c_h<c_i<3p\), so
\[
        A_h=c_h-3h<3p-3h<3p,
\]
contradicting the requirement that the first \(h\) parts lie weakly above the
inserted part \(3p\).  If \(h=0\), then \(c_1\le c_i<3p\), so
\(A_1<3p-3\), and the new top gap from \(3p\) to \(A_1\) is at least \(3\).

The case \(h=r\) is impossible for an easy insertion, because then the inserted
part \(3p\) would be the smallest part and would violate the final \(3\)-flat
condition.  Thus, if \(h\ge i\), we have \(h<r\), and
\[
        A_{h+1}\le A_{i+1}<3(p-i)\le 3p-3,
\]
so the lower gap from \(3p\) to \(A_{h+1}\) is at least \(3\).  In all cases
the easy insertion would fail to be \(3\)-flat.

Finally, the easy insertion, if admissible, has only one possible resulting
partition: insert \(3p\) into the weakly decreasing order of \(A\).  If there
are equal parts, choosing a different position among those equal parts gives
the same partition.  Thus there is at most one easy insertion.
\end{proof}

The safety condition is designed to be stable under the unique admissible insertion.  This stability is what lets the inverse algorithm process an entire record list.

\begin{lemma}[Safety is preserved]\label{lem:safety-preserved}
Let \(A\) be \(p\)-safe.  If an admissible insertion of size \(p\) is performed,
then the resulting \(3\)-flat partition is again \(p\)-safe.
\end{lemma}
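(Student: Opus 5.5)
The plan is a direct computation of the sequence $h_j(B)=B_j+3j$ for the partition $B$ produced by the insertion, compared index by index with $h_j(A)$. Two facts drive the argument.

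First, every index $j$ of $A$ satisfies $h_j(A)=A_j+3j\ge 4$. So if some $h_j(A)$ were divisible by $3$ and at most $3p$, it would lie in $\{3,6,\ldots,3p\}$, which $p$-safety forbids. Hence every $h_j(A)\le 3p$ is \emph{not} divisible by $3$.

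Second, an insertion shifts indices in a controlled way. Each part of $B$ either keeps its index while gaining $3$ in value, or keeps its value while gaining $1$ in index. In both cases $h$ increases by exactly $3$, or stays the same. The only genuinely new entry is the one for the inserted part.

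\emph{Hard case.} By Lemma~\ref{lem:unique-insertion}, an admissible hard insertion occurs after exactly $i$ parts with $h_i(A)<3p<h_{i+1}(A)$ and $1\le i<r$. Then
\[
B=(A_1+3,\ldots,A_i+3,\,3(p-i),\,A_{i+1},\ldots,A_r).
\]
\begin{itemize}
\item For $j\le i$ we get $h_j(B)=h_j(A)+3$. Here $h_j(A)<3p$, so by the first fact $h_j(A)\not\equiv0\pmod 3$, and hence $h_j(B)\not\equiv 0\pmod 3$.
\item The inserted part gives $h_{i+1}(B)=3(p-i)+3(i+1)=3p+3$, which lies outside $\{3,\ldots,3p\}$.
\item For $j\ge i+2$ we get $h_j(B)=h_{j-1}(A)+3\ge h_{i+1}(A)+3>3p$.
\end{itemize}
So $B$ is $p$-safe.

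\emph{Easy case.} Let $t$ be the number of parts of $A$ that are at least $3p$. Inserting $3p$ in weakly decreasing order places it in position $t+1$; by Lemma~\ref{lem:unique-insertion} the resulting partition does not depend on where it goes among equal parts.
\begin{itemize}
\item For $j\le t$ we have $h_j(B)=h_j(A)\ge 3p+3j>3p$.
\item The new entry is $h_{t+1}(B)=3p+3(t+1)>3p$.
\item For $j\ge t+2$ we have $h_j(B)=h_{j-1}(A)+3$. This is either greater than $3p$, or else $h_{j-1}(A)<3p$, in which case the first fact shows it is not a multiple of $3$.
\end{itemize}
Again $B$ is $p$-safe.

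I expect no serious obstacle. The only delicate point is the congruence argument for the shifted entries $h+3$, and the lower bound $h_j(A)\ge4$ handles it by excluding the boundary value $0$. I would also remark that admissibility (rather than mere $3$-flatness) enters only through the location of the inserted part that Lemma~\ref{lem:unique-insertion} provides in the hard case. In the easy case, safety follows from the weakly decreasing placement alone.
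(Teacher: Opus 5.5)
Your proof is correct and is essentially the paper's argument. It uses the same hard/easy split, the same observation that every old $h$-value is either unchanged or increased by exactly $3$, and the same computation of the new entries $3(p+1)$ and $3(p+t+1)$. The paper does differ in one small way: it treats all shifted entries uniformly, since a shift by $3$ keeps non-multiples of $3$ non-multiples and sends $3s$ with $s>p$ to $3(s+1)$. So it never needs the location $h_i(A)<3p<h_{i+1}(A)$ from Lemma~\ref{lem:unique-insertion}, and admissibility serves only to guarantee that the result is $3$-flat.
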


\begin{proof}
Let \(B\) be the partition after the insertion.

First suppose the insertion is hard after the first \(i\) parts.  For every old
part of \(A\), the corresponding value \(h_j=A_j+3j\) is increased by exactly
\(3\) in \(B\): the first \(i\) parts are raised by \(3\), and the remaining
old parts have their indices increased by \(1\).  The newly inserted part has
\(h\)-value
\[
        3(p-i)+3(i+1)=3(p+1).
\]
Thus a non-divisible \(h\)-value remains non-divisible, and a divisible
\(h\)-value \(3s\) with \(s>p\) becomes \(3(s+1)\).  The new \(h\)-value
\(3(p+1)\) is also larger than \(3p\).  Hence no \(h\)-value of \(B\) lies in
\(\{3,6,\ldots,3p\}\).

Now suppose the insertion is easy after the first \(t\) parts.  The old
\(h\)-values above the inserted part are unchanged.  The inserted part has
\(h\)-value
\[
        3p+3(t+1)=3(p+t+1)>3p.
\]
Every old part below the insertion point has its index increased by \(1\), so
its \(h\)-value is increased by \(3\).  Again no new \(h\)-value can enter
\(\{3,6,\ldots,3p\}\).  Thus \(B\) is \(p\)-safe.
\end{proof}

We can now iterate the one-step existence and uniqueness statements.  This gives a genuine inverse algorithm for every $3$-regular input.

\begin{lemma}[The inverse algorithm is well-defined]\label{lem:well-defined-inverse}
For every \(3\)-regular partition \(\beta\), the inverse insertion algorithm
stated in the introduction is well-defined.  At each step exactly one
admissible insertion is available and it is the insertion selected by the rule:
use the admissible hard insertion if it exists, and otherwise use the easy
insertion.
\end{lemma}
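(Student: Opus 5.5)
The plan is an induction along the inverse algorithm, with invariants that let us invoke Lemmas~\ref{lem:exist-insertion}, \ref{lem:unique-insertion} and \ref{lem:safety-preserved} at every step.

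Fix a $3$-regular $\beta$ and set $A^{(0)}=\Lambda(\res(\beta))$, which is well-defined by Lemma~\ref{lem:core}. Let $k$ be its length, which equals the number of nonmultiples of $3$. Let $\nu=q'$, where $q$ is a partition by Lemma~\ref{lem:extract}. Since $\ell(q)\le k$, every part of $\nu$ is at most $k$. Write the parts of $\nu$ as $p_1\ge p_2\ge\cdots\ge p_m$, and let $A^{(j)}$ be the partition after the $j$-th insertion. We will prove by induction on $j$ that $A^{(j)}$ is $3$-flat, that its number of nonmultiples of $3$ is still $k$, and that $A^{(j)}$ is $p_{j+1}$-safe.

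Two of these invariants are immediate. Insertions add multiples of $3$ to existing parts or insert new multiples of $3$, so the count $k$ of nonmultiples never changes. Flatness is part of the definition of an admissible insertion.

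For safety, the key monotonicity is that $p_{j+1}\le p_j$. Hence $p_j$-safe implies $p_{j+1}$-safe, because $\{3,\dots,3p_{j+1}\}\subseteq\{3,\dots,3p_j\}$. Combined with Lemma~\ref{lem:safety-preserved}, this gives the inductive step: if $A^{(j-1)}$ is $p_j$-safe and we perform an admissible insertion of size $p_j$, then $A^{(j)}$ is $p_j$-safe, hence $p_{j+1}$-safe.

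The base case, that $A^{(0)}$ is $p_1$-safe, is where I expect the real work. I will show the stronger statement that $A^{(0)}$ is $k$-safe, so that no value $h_i=c_i+3i$ is a multiple of $3$. This is immediate: each core part $c_i$ is $1$ or $2$ modulo $3$, hence so is $c_i+3i$. Since $p_1\le k$, $A^{(0)}$ is $p_1$-safe.

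Now, at step $j$ we have $1\le p_j\le k$ and $A^{(j-1)}$ is $p_j$-safe. By Lemma~\ref{lem:exist-insertion}, an admissible insertion of size $p_j$ exists. By Lemma~\ref{lem:unique-insertion}, there is at most one admissible hard insertion; if one exists, no easy insertion is admissible; and the easy insertion, when admissible, is unique. So exactly one admissible move exists, and it is the one chosen by the rule (hard if possible, otherwise easy).

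We also need the hard insertion condition $h\le$ current length. This is built into Lemma~\ref{lem:exist-insertion}(i), where $i<r$. Moreover, Lemma~\ref{lem:unique-insertion} shows that any admissible hard insertion has $1\le i<r$, so no separate check is required.

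The main subtle point is making sure the hypothesis $p\le k$ of Lemma~\ref{lem:exist-insertion} refers to the current partition. This is handled by the invariance of $k$ noted above. The empty case, where $\nu$ has no parts, is trivial.
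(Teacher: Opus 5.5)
Your proposal is correct and follows essentially the same route as the paper. The paper also starts from the core $\Lambda(\res(\beta))$, notes it is $p$-safe for every $p\le k$ because $h_i\equiv A_i\not\equiv 0 \pmod 3$, propagates safety along the weakly decreasing list $\nu$ via Lemma~\ref{lem:safety-preserved}, uses the invariance of the number $k$ of nonmultiples to apply Lemma~\ref{lem:exist-insertion}, and obtains uniqueness from Lemma~\ref{lem:unique-insertion}; your explicit check that the side condition on $h$ is automatic is a harmless addition the paper leaves implicit.
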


\begin{proof}
Let \(v=\res(\beta)\), let \(A=\Lambda(v)\), and let \(k\) be the length of
\(v\), equivalently the number of parts of the \(3\)-regular partition
\(\beta\).  By Lemma~\ref{lem:extract}, the extracted quotient sequence
\[
        q_i=\frac{\beta_i-A_i}{3}
\]
is a partition, and \(\nu=q'\) is therefore a weakly decreasing list.  Every
part of \(\nu\) is at most \(k\).

The starting partition \(A=\Lambda(v)\) is \(p\)-safe for every \(p\le k\):
indeed
\[
        h_i(A)=A_i+3i\equiv A_i\pmod3,
\]
and \(A_i\) is never divisible by \(3\).

We now process the parts of \(\nu\) from largest to smallest.  Suppose the next
part is \(p\).  All previously processed parts are at least \(p\).  By induction
on the number of processed parts, the current partition is \(p\)-safe: this is
clear at the start, and after an insertion of size \(q\ge p\),
Lemma~\ref{lem:safety-preserved} gives \(q\)-safety, which immediately implies
\(p\)-safety.  Insertions add only parts divisible by \(3\), so the number of
non-divisible parts remains \(k\).  Since \(p\le k\), Lemma~\ref{lem:exist-insertion}
gives the existence of an admissible insertion of size \(p\).  Lemma~\ref{lem:unique-insertion}
gives uniqueness in the precise sense required by the algorithm: there is at
most one hard insertion, a hard insertion excludes an easy insertion, and the
easy insertion has a unique result when it is admissible.

After performing the insertion, Lemma~\ref{lem:safety-preserved} supplies the
safety condition needed for the next step.  Therefore the algorithm is
well-defined for the whole list \(\nu\).
\end{proof}

Before comparing the forward and inverse algorithms, we record one bookkeeping fact about labeled insertions.  It is the invariant that prevents later insertions from changing the value or the accounting position of an earlier inserted multiple.

\begin{lemma}[Later hard insertions do not raise earlier labels]\label{lem:no-raise-labels}
During the inverse algorithm, label every inserted multiple of $3$ by the size $p$ of the insertion which created it.  Suppose a labeled multiple of size $p$ currently occupies position $j$ in the current $3$-flat partition $A$.  Then
\[
        A_j+3j>3p.
\]
Consequently, if a later insertion has size $q\le p$, then a hard insertion of size $q$ cannot include this labeled part among the parts which are raised by $3$.
\end{lemma}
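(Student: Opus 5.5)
The plan is to argue by induction on the number of insertions performed, tracking the $h$-values $h_j(A)=A_j+3j$ of all labeled parts. The key observation, already implicit in Lemma~\ref{lem:safety-preserved}, is that every insertion changes each old $h$-value by either $0$ or $+3$, and never lowers it. The two cases are as follows.
\begin{itemize}
\item In a hard insertion after $i$ parts, every old part has its $h$-value increased by exactly $3$: parts above are raised by $3$, and parts below are shifted down one index.
\item In an easy insertion, parts above the inserted part keep their $h$-value, and parts below gain $3$.
\end{itemize}
Hence, once a labeled part satisfies $h>3p$, it satisfies it forever after. It therefore suffices to check the inequality at the moment of creation.

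At creation, a hard insertion of size $p$ after $i$ parts places $3(p-i)$ in position $i+1$. Its $h$-value is $3(p-i)+3(i+1)=3(p+1)>3p$, exactly as computed in Lemma~\ref{lem:safety-preserved}. An easy insertion of size $p$ after $t$ parts gives $h=3p+3(t+1)>3p$. Both cases give the base inequality. One caveat: I must make sure that ``position $j$'' is well defined when equal parts occur, since an easy insertion may be placed anywhere among equal parts. I would fix the convention that the new part is inserted after all parts $\ge 3p$, as in Lemma~\ref{lem:exist-insertion}. I would also note that the $h$-values are strictly increasing, so a labeled part can be tracked by its index under the update rules above.

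For the consequence, suppose a later insertion of size $q\le p$ is hard after the first $i$ parts. By Lemma~\ref{lem:unique-insertion}, admissibility forces $h_i(A)<3q$, and since the $h$-sequence is strictly increasing, every raised part $j\le i$ satisfies $h_j(A)\le h_i(A)<3q\le 3p$. The labeled part has $h_j(A)>3p$, so it cannot be among the first $i$ parts and is not raised.

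The main obstacle I expect is bookkeeping rather than estimation. I need to verify that labels follow parts consistently under index shifts, especially when an inserted multiple equals an existing labeled multiple. If ambiguity arises there, I would assign positions among equal parts so as to maximize $h$ for the older labels. Since all the $h$-values of equal parts exceed $3p$ anyway, the conclusion is unaffected.
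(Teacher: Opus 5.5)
Your proposal is correct and follows the paper's proof. It uses the same creation-time computations $3p+3(t+1)>3p$ and $3(p-i)+3(i+1)=3(p+1)>3p$, induction over later insertions, and the same derivation of the consequence from $A_i+3i<3q$ together with the strict increase of $A_i+3i$. The only small difference is in the inductive step. You note, as in Lemma~\ref{lem:safety-preserved}, that every insertion adds $0$ or $3$ to every old value $A_j+3j$, including the raised parts, so your invariant does not depend on $q\le p$. The paper instead first uses $q\le p$ to show the labeled part is not raised, and then observes that its value is unchanged.
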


\begin{proof}
At the moment an easy insertion of size $p$ creates a part in position $j$, the new part is $3p$, so its value of $A_j+3j$ is $3p+3j>3p$.  At the moment a hard insertion of size $p$ after $h$ parts creates a part, the new part is $3(p-h)$ and its position is $h+1$, so its value of $A_j+3j$ is
\[
        3(p-h)+3(h+1)=3(p+1)>3p.
\]
Now consider a later step, and assume the displayed inequality holds before that step.  An easy insertion does not change the value of any old part; it either leaves the old index unchanged or increases it by one, so the displayed inequality remains true.  For a later hard insertion of size $q\le p$ after $h$ parts, admissibility forces
\[
        A_h+3h<3q
\]
when $h>0$.  If the labeled part were among the first $h$ parts, then the strict increase of the sequence $A_i+3i$ would give
\[
        A_h+3h\ge A_j+3j>3p\ge3q,
\]
a contradiction.  Thus the labeled part is not raised.  It may have its index increased by one if the new hard part is inserted above it, but its value is unchanged, so the inequality remains true.  This proves the invariant by induction over the later insertions.
\end{proof}

It remains to prove that the insertion algorithm and the deletion algorithm undo each other in the same accounting system.  The following compatibility lemma is the point at which the proof uses labels.

\begin{lemma}[Compatibility with the deletion algorithm]\label{lem:compatibility}
Let $\beta$ be a $3$-regular partition, and construct
$\alpha=\Phi_3^{-1}(\beta)$ by the inverse insertion algorithm.  When the
forward $\Phi_3$-algorithm is applied to $\alpha$, its recorded multiset is
exactly the multiset of parts of $\nu=q'$ extracted from $\beta$.  More
precisely, every easy insertion of size $p$ is reversed by a Stage S2 deletion
recording $p$, and every hard insertion of size $p$ is reversed by a Stage S3
deletion recording $p$.
\end{lemma}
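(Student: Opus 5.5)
The plan is to argue by induction on the insertions, using the labels of Lemma~\ref{lem:no-raise-labels}. Write $\alpha$ as the result of inserting the parts $p_1\ge p_2\ge\cdots\ge p_m$ of $\nu$ into $\Lambda(v)$. Each multiple of $3$ in $\alpha$ then carries a label $p_t$ and a type, easy or hard. By Lemma~\ref{lem:no-raise-labels}, a part created by an insertion of size $p$ is never raised by a later hard insertion, since all later insertions have size $\le p$. Hence its value in $\alpha$ is still the value it had when created: $3p$ for an easy part, and $3(p-h)$ for a hard part inserted after $h$ parts.

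The proof would then establish three claims in order.

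\textbf{Claim 1: Stage S2 deletes exactly the easy-labeled parts.} Let $\alpha^{\rm hard}$ be the partition obtained from the hard insertions alone, performed in the same order with the same positions $h$. The first step is a commutation argument. Deleting an easy part from an intermediate partition and then performing a later hard insertion gives the same result as performing the hard insertion and then deleting the easy part. This uses the fact that the easy part is not raised and sits outside the window of raised parts. It follows that removing all easy parts from $\alpha$ yields $\alpha^{\rm hard}$. Next one checks that the dynamic smallest-to-largest scan of S2 finds each easy part flat-removable at the moment it is reached. An easy part is flat-removable when created, and later insertions of smaller size only touch parts below it or leave its neighbourhood flat. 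Finally, every hard part stays non-flat-removable throughout S2. When it is created, its two boundary gaps are strictly between $0$ and $3$ with the pattern $j_2\le j_1$ of Lemma~\ref{lem:removable}. Deleting easy parts restores the neighbourhood it would have in $\alpha^{\rm hard}$, where that pattern persists; this persistence would again be checked via the $h$-values $A_i+3i$.

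\textbf{Claim 2: Stage S3 undoes the hard insertions in reverse order.} The largest-to-smallest scan of S3 on $\alpha^{\rm hard}$ should meet the hard parts in the reverse of their creation order. The last-created hard part is inserted with the smallest label and the largest threshold position relative to its value. The argument proceeds by induction on the number of hard insertions: the first S3 deletion must undo the last hard insertion. Here the needed facts are as follows. At creation, the first $h$ parts are nonmultiples of $3$, because by Lemma~\ref{lem:no-raise-labels} earlier labeled parts are never in the raised window. Moreover, any hard part above the newest one has already been scanned, so the newest is the topmost remaining multiple of $3$ at the right moment.

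\textbf{Claim 3: the recorded values match.} This is then immediate. When the hard part $3(p-h)$ is deleted at position $i=h+1$ with $h$ nonmultiples above it, S3 subtracts $3$ from those $h$ parts, exactly reversing the raise. It records $(p-h)+h=p$. An easy part $3p$ records $p$ in S2.

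I expect the main obstacle to be Claim 2, specifically the assertion that the order in which S3 scans parts from largest to smallest agrees with reverse insertion order. A hard part inserted later has smaller label but may have larger or smaller value than earlier ones. I would first try to prove that a later hard insertion of size $q\le p$ always lands strictly below every earlier hard part, or else is removed before it in the scan, by comparing the quantities $c_i=A_i+3i$. Earlier labeled parts have $c>3p\ge 3q$, while the insertion point satisfies $c_h<3q$. This forces the new part below all earlier labeled parts, so it is scanned after them. If that is the wrong way round for a stack, I would instead show that S3 deletions of hard parts at different positions commute, so that the order does not matter.
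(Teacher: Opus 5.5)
Your decomposition matches the paper's: Stage S2 removes exactly the easy insertions, and Stage S3 undoes the hard ones, each recording $(p-h)+h=p$. However, the step you flag as the main obstacle is argued in the wrong direction, and your fallback would not work.

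Your inequalities are correct. An earlier labeled part at position $j$ has $c_j>3p\ge 3q$, and an admissible hard insertion of size $q$ after $h$ parts has $c_h<3q$. Since $c_1<c_2<\cdots$, this gives $h<j$. So the raised window ends strictly \emph{above} every earlier labeled part, and the new part, at position $h+1\le j$, lies \emph{above} all of them, not below.

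That is exactly the stack order Claim~2 needs. The newest hard part is the topmost multiple of $3$ in $\alpha^{\rm hard}$, so S3 meets it first. At that moment the partition is exactly the state just after its creation, so the part sits at position $h+1$ and its deletion undoes that insertion; then you induct. The alternative that ``S3 deletions commute'' is not available: the scan fixes the order, and deleting a multiple lowers every part above it, so deletions at different heights change each other's records. The paper avoids the order question altogether. When S3 reaches a hard part $x$, the only parts above it are the $h$ core parts raised when $x$ was created, so it records $x/3+h=p$.

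Claim~1 also has a gap at ties. The statement that every hard part stays non-flat-removable throughout S2 is false for indexed occurrences. In the paper's own example $\alpha=(5,3,3,1)$, the hard $3$ (size $2$, after one part) and the easy $3$ (size $1$) coincide. If the hard label sits on the lower copy, the scan reaches it first, and it is flat-removable. Moreover, the scan reaches a hard part $x$ before any easy part above it, so ``deleting easy parts restores the $\alpha^{\rm hard}$ neighbourhood'' only covers parts below $x$. An easy part directly above $x$ lies between $x$ and its upper neighbour $u$ in $\alpha^{\rm hard}$, where $0<u-x<3$, so it equals $x$. You need the paper's convention (easy labels on the lowest of equal parts) or a multiset-level formulation.

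The rest of Claim~1 is easier than you suggest:
\begin{enumerate}
\item No hard insertion ever follows an easy one. After inserting $3p$ the top part is at least $3p$, while a hard insertion of size $q\le p$ needs $A_1+3=c_1\le c_h<3q$. So $\alpha^{\rm hard}$ is simply the partition before the first easy insertion, and no commutation is needed.
\item Adding $3q$ to a nonempty $3$-flat $B$ keeps it $3$-flat iff $3q\le B_1+2$. Every easy value is at most the first one, which is at most $\alpha^{\rm hard}_1+2$. Hence $\alpha^{\rm hard}$ together with any sub-multiset of easy values is $3$-flat, so each easy part is still removable when the dynamic scan reaches it.
\item The frozen neighbourhood of each hard part in $\alpha^{\rm hard}$ follows from $c$-values mod $3$, as you suspected. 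Labeled parts have $c\equiv 0\pmod 3$ and core parts do not, so every hard insertion lands between two core parts and cannot raise $u$ without raising $x$.
\end{enumerate}
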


\begin{proof}
We use labeled multiples of $3$.  When the inverse algorithm inserts a multiple,
label it by its size $p$ and by its type, easy or hard.  Nonmultiples are left
unlabeled.  
For this bookkeeping only, if a newly inserted easy part is equal to
pre-existing parts, we place its label on the lowest occurrence among the equal
parts.  This convention does not change the underlying partition, but it agrees
with the smallest-to-largest scan in Stage S2. Thus, within any block of equal multiples of $3$, Stage S2 encounters the most
recent easy labels first.
Since the parts of $\nu$ are processed from largest to smallest, every later insertion after a label of size $p$ has size at most $p$.

\smallskip
\noindent
We first show that Stage S2 deletes exactly the easy-labeled parts and records their labels.  An easy insertion of size $p$ creates a part equal to $3p$, and by definition deleting that newly inserted part restores the previous $3$-flat partition.  By Lemma~\ref{lem:no-raise-labels}, no later hard insertion raises this easy-labeled part; later easy insertions also do not change its value.

It remains only to check that later insertions do not destroy the
flat-removability of an easy-labeled part.  Let the easy-labeled part have
label $p$, so that its value is $3p$.  We claim the following invariant.
After any number of later insertions, this part still has value $3p$, and
if it is deleted then the resulting partition is again $3$-flat.  Indeed,
by Lemma~\ref{lem:no-raise-labels}, no later hard insertion raises this
labeled part, and easy insertions never change the values of old parts.  A
later easy insertion has size $q\le p$, and hence inserts a part $3q\le 3p$.
If this inserted part becomes adjacent to the easy-labeled part, then after
removing the easy-labeled part the new adjacent gap is no larger than a gap
which was already present at the moment the later easy insertion was made.
A later hard insertion also cannot raise the easy-labeled part.  Its newly
inserted multiple is at most $3q\le 3p$; if it were inserted immediately
above the easy-labeled part, weak decrease would force equality, which would
amount to a hard insertion of size $p$ after no larger parts, impossible by
admissibility.  Thus a later hard insertion does not change the two neighbors
which are relevant for deleting the easy-labeled part.  Therefore the
easy-labeled part remains flat-removable, remains equal to $3p$, and is
deleted in Stage S2 with record $p$.

Conversely, a hard-labeled part is not deleted in Stage S2.  Suppose that a
hard-labeled part \(x\) was created by a hard insertion of size \(p\) after
\(h\) larger parts.  Thus \(x=3(p-h)\) at the moment of creation.  Let \(u\)
be the last of the \(h\) old parts which were raised by \(3\), and let \(v\)
be the first old part below the inserted part.  These are the witness parts
for the non-removability of \(x\): immediately after the insertion, deleting
\(x\) would place \(u\) next to \(v\), and the resulting gap is at least \(3\).
Both witness parts are nonmultiples of \(3\), and hence neither is ever deleted
by Stage S2 or Stage S3.

We shall use the following simple barrier observation.  Once a hard insertion
has created a witness pair \(u,v\), later insertions cannot decrease the gap
between these two witness parts.  Indeed, a later easy insertion changes no old
part.  A later hard insertion either raises both witnesses, raises only the
upper witness, or raises neither; it can never raise the lower witness without
also raising the upper witness, because the parts raised by a hard insertion
are an initial segment of the partition.  Hence the witness gap is preserved or
increased.  Later insertions may place additional labeled multiples between
\(u\) and \(v\), but they do not remove the original obstruction created by the
hard insertion.

Now run Stage S2 from smaller parts to larger parts.  Suppose, for
contradiction, that some hard-labeled part is deleted in Stage S2, and choose
the first such part \(x\) encountered by the scan.  At that moment no
hard-labeled part below \(x\) has been deleted.  Consider the witness pair
\(u,v\) attached to \(x\).  All easy-labeled multiples between \(u\) and \(v\)
which the Stage S2 scan has reached have already been removed, and removing
such parts cannot decrease the witness gap.  If no hard-labeled multiple
remains between \(u\) and \(v\), then deleting \(x\) exposes the witness gap,
which is still at least \(3\), so \(x\) is not flat-removable.

It remains to rule out the case in which hard-labeled multiples still lie
between \(u\) and \(v\).  Among all hard-labeled multiples in this interval,
choose one whose witness interval is minimal under inclusion.  The witness
interval of any hard-labeled multiple lying strictly between the two witnesses
of this chosen part is contained in the chosen witness interval: witness parts
are nonmultiples, and later insertions never move one nonmultiple past another.
Thus, by minimality, no hard-labeled multiple lies strictly between the two
witnesses of the chosen part.  All easy-labeled multiples inside this minimal
interval have already been removed before the chosen hard-labeled part could
become flat-removable, because Stage S2 scans from smaller parts to larger
parts and removes precisely such easy multiples when they are encountered.
Therefore deleting the chosen hard-labeled part would expose its own witness
gap, still of size at least \(3\).  Hence it is not flat-removable.  This
contradiction shows that no hard-labeled part is deleted during Stage S2.
Consequently, Stage S2 deletes exactly the easy-labeled parts, and after Stage
S2 the remaining multiples of \(3\) are exactly the hard-labeled parts.

\smallskip
We now consider Stage S3.  Let $x$ be a hard-labeled part created by an insertion of size $p$ after $h$ larger parts.  By Lemma~\ref{lem:no-raise-labels}, no later hard insertion raises $x$, so throughout the inverse construction its value remains
\[
        x=3(p-h).
\]
The $h$ parts raised at the creation of $x$ are unlabeled nonmultiples, and they remain above $x$ until $x$ is deleted.  Any later inserted multiple above $x$ is deleted earlier in the largest-to-smallest Stage S3 scan; any later inserted easy multiple has already been deleted in Stage S2.  Thus, when Stage S3 reaches $x$, the number of parts strictly larger than $x$ is exactly $h$.  The Stage S3 rule therefore records
\[
        \frac{x}{3}+h=(p-h)+h=p,
\]
deletes $x$, and subtracts $3$ from precisely the same $h$ nonmultiple parts that were raised when $x$ was inserted.  This is exactly the inverse of the hard insertion which created $x$.

The unlabeled parts are the nonmultiples in the residue core, and neither Stage S2 nor Stage S3 deletes nonmultiples.  Therefore every inserted label is recorded exactly once, with the same value, and no extra values are recorded.  The forward record multiset is exactly the multiset of parts of $\nu$.
\end{proof}
The preceding lemmas now assemble into the promised self-contained modulus-three Stockhofe bijection.

\begin{proposition}\label{prop:phi}
The map $\Phi_3$ is a weight-preserving bijection from $3$-flat partitions of $N$ to $3$-regular partitions of $N$.  Its inverse is the insertion algorithm stated in the introduction.  Moreover, we have
\[
        \res(\Phi_3(\alpha))=\res(\alpha).
\]
\end{proposition}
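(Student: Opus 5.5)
The proof assembles the preceding lemmas into two compositions: $\Phi_3\circ\Phi_3^{-1}=\mathrm{id}$ on $3$-regular partitions, and $\Phi_3^{-1}\circ\Phi_3=\mathrm{id}$ on $3$-flat partitions. Lemma~\ref{lem:phi-forward-well} already shows that $\Phi_3$ is well defined, sends $3$-flat partitions of $N$ to $3$-regular partitions of $N$, and satisfies $\res(\Phi_3(\alpha))=\res(\alpha)$. Lemma~\ref{lem:well-defined-inverse} shows that the insertion algorithm is well defined on every $3$-regular $\beta$. Insertions only add multiples of $3$ and add $3$ to existing parts, so weight bookkeeping parallel to Lemma~\ref{lem:phi-forward-well} shows that $\Phi_3^{-1}(\beta)$ is a $3$-flat partition of the same weight with the same residue sequence.

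\textbf{Step 1: $\Phi_3\circ\Phi_3^{-1}=\mathrm{id}$.} Let $\alpha=\Phi_3^{-1}(\beta)$. The nonmultiples of $3$ in $\alpha$ have residue sequence $\res(\beta)$, so the core reached at the end of the forward algorithm is $\Lambda(\res(\alpha))=\Lambda(\res(\beta))=A$. By Lemma~\ref{lem:compatibility}, the forward record multiset is exactly the multiset of parts of $\nu=q'$. Hence the sorted record partition is $\nu$, and
\[
\Phi_3(\alpha)=A+3\nu'=A+3q=\beta,
\]
using $q''=q$ and the definition of $q$.

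\textbf{Step 2: injectivity of $\Phi_3$.} Suppose $\Phi_3(\alpha)=\beta$. Then $\res(\alpha)=\res(\beta)$, so the core is $A=\Lambda(\res(\beta))$, and the sorted record list $\nu$ satisfies $3\nu'=\beta-A=3q$, so $\nu=q'$. I then run the forward algorithm on $\alpha$ backwards. The Stage S3 deletions, undone in reverse order (smallest deleted part first), are hard insertions. The Stage S2 deletions, undone in reverse order (largest first), are easy insertions. This reconstructs $\alpha$ from $A$ by a sequence of admissible insertions whose sizes are the records.

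Two things must then be checked: that these undo-moves occur in weakly decreasing order of size, and that each is the admissible move in the sense of the definition (hard insertions are non-flat-removable, easy ones are flat-removable). Once that is established, Lemma~\ref{lem:unique-insertion}, applied at each step, forces this sequence to coincide with the inverse algorithm, so $\alpha=\Phi_3^{-1}(\beta)$.

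\textbf{Main obstacle.} The delicate point is the ordering of sizes in Step 2.
\begin{itemize}
\item Within Stage S3, the record $a+i-1$ should weakly decrease as the scan moves down.
\item Every S3 record should be at least every S2 record.
\item Within Stage S2, later-deleted parts (larger ones) should give weakly larger records.
\end{itemize}
For the comparisons involving Stage S3, I would use the $h$-values $A_j+3j$ as in Lemma~\ref{lem:no-raise-labels}. A remaining multiple $3a$ at position $i$ with neighbor residues $j_2\le j_1$ has $h$-value $3(a+i)$, and the flatness of the core forces these $h$-values to be monotone along the scan. Where a comparison is unclear, I would argue cardinality-free: each intermediate undo-step yields a $p$-safe partition, and uniqueness then identifies it with the inverse algorithm's move.
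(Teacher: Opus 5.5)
\textbf{Assessment: correct strategy, same as the paper, but Step 2 is unfinished and one of its stated claims is false.}

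Your architecture is the paper's. Step 1 is exactly its argument for $\Phi_3\circ\Phi_3^{-1}=\mathrm{id}$: Lemma~\ref{lem:compatibility}, then $A+3\nu'=A+3q=\beta$. Step 2 is its argument for the other composite: undo the deletions one at a time and invoke Lemma~\ref{lem:unique-insertion}.

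You are right that Step 2 silently needs the chronological undo order to agree with reading $\nu$ from largest to smallest; the paper simply asserts this. However, your proposal leaves it unproved, and your first bullet points the wrong way. You undo the \emph{last} Stage S3 deletion first, so you need the S3 records to weakly \emph{increase} as the scan moves down. The decreasing version is false. Take $\alpha=(8,6,5,5,5,3,2)$:
\begin{itemize}
\item Stage S2 deletes nothing.
\item Stage S3 deletes $6$ with record $2+1=3$, leaving $(5,5,5,5,3,2)$.
\item Stage S3 then deletes $3$ with record $1+4=5$.
\end{itemize}
Correspondingly, the inverse algorithm with $\nu=(5,3)$ first makes the hard insertion of size $5$ (recreating the $3$) and then the one of size $3$ (recreating the $6$). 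Your fallback does not help either. Lemma~\ref{lem:unique-insertion} concerns one prescribed size at one given state, so it cannot compensate for sizes arriving in the wrong order.

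\textbf{How to close the gap.} The correct ordering is easy to prove with the tool you named.
\begin{itemize}
\item[(i)] \emph{S3 records weakly increase along the scan.} Suppose Stage S3 deletes $3a$ from position $i$ with record $r=a+i-1$. In the new $3$-flat partition, position $i$ holds the old lower neighbor $3(a-1)+j_2$, whose $h$-value is $3r+j_2$. The next S3 deletion $3b$ sits at some position $i''>i$. Strict increase of $A_j+3j$ gives $3(b+i'')>3r+j_2$, so its record satisfies $b+i''-1\ge r$.
\item[(ii)] \emph{Every S3 record is at least every S2 record.} Let $3c$ be deleted in Stage S2, and let $3a$ be the first S3 deletion, at position $i$; by (i) it carries the smallest S3 record. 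If $c\le a$ there is nothing to prove. Otherwise, at the moment of that S2 deletion the flat partition contains $3c$ above $3a$. It therefore contains a nonmultiple in each block $\{3d+1,3d+2\}$, $a\le d<c$, between them. These parts are never deleted or altered before $3a$ is reached, and all parts above $3a$ at that time are nonmultiples. Hence $i-1\ge c-a$, and so $a+i-1\ge c$.
\item[(iii)] \emph{S2 records weakly increase.} Stage S2 deletes parts in weakly increasing order of value.
\end{itemize}

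With these three facts, each undo step recreates an earlier state of the forward run and is admissible:
\begin{itemize}
\item an S2 part was flat-removable when it was deleted;
\item an S3 part keeps the neighbor shape $j_2\le j_1$ of Lemma~\ref{lem:removable} throughout Stage S3, and $h\le p-1$ because $a\ge1$.
\end{itemize}
Ties among equal sizes are immaterial. Induction together with Lemma~\ref{lem:unique-insertion} then gives $\Phi_3^{-1}(\Phi_3(\alpha))=\alpha$.
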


\begin{proof}
Lemma~\ref{lem:phi-forward-well} proves that the forward algorithm lands in the correct
set, preserves weight, and preserves the ordered nonzero residue sequence.  It
remains to justify the inverse.

Run the forward algorithm on a \(3\)-flat partition \(\alpha\).  At the end of
the deletion stages, all multiples of \(3\) have been removed.  The remaining
partition is \(3\)-flat, \(3\)-regular, and has residue sequence
\(\res(\alpha)\); by Lemma~\ref{lem:core}, it is \(\Lambda(\res(\alpha))\).
The deleted weight has been recorded in a list \(\nu_{\rm rec}\).  Let \(\nu\) be the partition obtained by sorting \(\nu_{\rm rec}\).  The final output is
\[
        \beta=\Lambda(\res(\alpha))+3\nu'.
\]
Lemma~\ref{lem:extract} shows that the inverse extraction recovers this same
core and this same sorted record list \(\nu\).

We now reverse the deletions, reading the sorted record list from largest to
smallest.  A Stage S3 deletion of a part \(3a\) with exactly \(h\) larger parts
records
\[
        p=a+h.
\]
It also subtracts \(3\) from exactly those \(h\) larger parts.  Its inverse is
therefore forced: add \(3\) back to those \(h\) larger parts and insert
\[
        3a=3(p-h)
\]
immediately after them.  This is precisely a hard insertion of size \(p\).  The
calculation in the proof of Lemma~\ref{lem:phi-forward-well} shows that this insertion
is \(3\)-flat, and deleting the inserted part would recreate the gap enlarged
by \(3\); hence the inserted multiple is not flat-removable.

A Stage S2 deletion of a part \(3p\) records \(p\) and changes no other part.
Its inverse is forced as well: reinsert \(3p\) in weakly decreasing order.  Since
the original deletion was flat-removable, this is an admissible easy insertion.

Lemma~\ref{lem:unique-insertion} shows that each such reverse step is unique:
the hard position, if it exists, is forced; and an admissible hard insertion
excludes an easy insertion.  If no hard insertion exists, the easy insertion is
the only possible admissible move.  Thus reversing the recorded deletions
reconstructs the original \(3\)-flat partition \(\alpha\).

Conversely, begin with an arbitrary \(3\)-regular partition \(\beta\).  Extract
\(A=\Lambda(\res(\beta))\), the quotient partition \(q=(\beta-A)/3\), and the record
list \(\nu=q'\).  Lemma~\ref{lem:well-defined-inverse} proves, without any
appeal to an external bijection, that the insertion algorithm is well-defined
for this arbitrary input and that exactly one admissible move is available at
each step.  Let \(\alpha\) be the resulting \(3\)-flat partition.

Each insertion adds one recorded part \(p\) and changes the weight by exactly
\(3p\): this is immediate for an easy insertion, and for a hard insertion after
\(h\) parts the weight gain is
\[
        3h+3(p-h)=3p.
\]
The ordered nonzero residue sequence is unchanged throughout the insertion
process, because only multiples of \(3\) are inserted and all other changes are
by multiples of \(3\).  Therefore, when the forward \(\Phi_3\)-algorithm is
run on \(\alpha\), Lemma~\ref{lem:phi-forward-well} gives a \(3\)-regular partition
with residue sequence \(\res(\beta)\) and total weight
\[
        \wt(A)+3|\nu|
        =\wt(A)+3|q|
        =\wt(\beta).
\]
By Lemma~\ref{lem:compatibility}, the forward records are exactly the inserted
record multiset \(\nu\).  Hence the forward output is
\[
        A+3\nu'=A+3q=\beta.
\]
Therefore, the forward and reverse algorithms are inverse bijections, and the
residue-preservation statement follows from Lemma~\ref{lem:phi-forward-well}.
\end{proof}

\subsubsection{The bridge from \texorpdfstring{$\D_3^{(0)}$}{D3-zero} to \texorpdfstring{$\B_3^{(2)}$}{B3-two}}
The preceding pieces now combine into a direct bridge from the residue-zero $D_3$-partitions to the $3$-regular partitions whose largest part has residue $2$.  This bridge is the inverse of the map used in the definition of $\iota_n$.

The bridge proposition packages the raising map, conjugation, and $\Phi_3$ into a single bijection.

\begin{proposition}\label{prop:theta}
The map
\[
        \Theta_n:\D_3^{(0)}(n)\longrightarrow\B_3^{(2)}(n-1),
        \qquad
        \Theta_n(\mu):=\Phi_3\bigl(L(\mu)'\bigr),
\]
is a bijection.
\end{proposition}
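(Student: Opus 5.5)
The plan is to write $\Theta_n$ as a composition of three bijections that are already available and check that the intermediate sets match. By Lemma~\ref{lem:L}, $L:\D_3^{(0)}(n)\to\R(n-1)$ is a bijection with inverse $\mathcal T$. By Lemma~\ref{lem:conj}, conjugation $\sigma\mapsto\sigma'$ is a bijection $\R(n-1)\to\F^{(2)}(n-1)$. It then remains to show that $\Phi_3$ restricts to a bijection $\F^{(2)}(n-1)\to\B_3^{(2)}(n-1)$. Then
\[
\Theta_n=\Phi_3|_{\F^{(2)}(n-1)}\circ(\cdot)'\circ L
\]
is a composition of bijections, and its inverse is $\mathcal T\circ(\cdot)'\circ\Phi_3^{-1}$. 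This inverse is exactly the tail of $\iota_n$.

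For the restriction step, I will use Proposition~\ref{prop:phi}. It says that $\Phi_3$ is a weight-preserving bijection from all $3$-flat partitions of $N$ to all $3$-regular partitions of $N$, and that $\res(\Phi_3(\alpha))=\res(\alpha)$. So it suffices to show that, for $\alpha$ $3$-flat and $\beta=\Phi_3(\alpha)$, we have $\alpha\in\F^{(2)}(N)$ if and only if $\beta\in\B_3^{(2)}(N)$.

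\begin{itemize}
\item For a $3$-regular $\beta$, every part is a nonmultiple of $3$. Hence the first entry of $\res(\beta)$ is the residue of its largest part $\beta_1$, so $\beta\in\B_3^{(2)}(N)$ exactly when the first entry of $\res(\beta)$ is $2$.
\item For $\alpha$, membership in $\F^{(2)}(N)$ is by definition the condition that the first entry of $\res(\alpha)$ is $2$.
\end{itemize}

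Residue preservation therefore gives the equivalence, and bijectivity of $\Phi_3$ on the full sets yields bijectivity of the restriction, with inverse the restricted insertion algorithm.

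The empty case needs a brief check. When $n-1=0$, we have $\R(0)=\F^{(2)}(0)=\B_3^{(2)}(0)=\varnothing$. The empty partition $\alpha$ has empty residue sequence, so it lies in neither $\F^{(2)}(0)$ nor $\B_3^{(2)}(0)$, which is consistent with the claimed equivalence. Also $\D_3^{(0)}(1)$ is empty, since any $\mu\in\D_3(1)$ would have $\tau(\mu)\equiv0\pmod 3$ only if $\mu$ were a triple of a part, which is impossible for weight $1$; this matches Lemma~\ref{lem:L}.

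The only real content is the residue argument in the second paragraph. I expect no serious obstacle there, since the heavy lifting, namely the bijectivity of $\Phi_3$ and the residue preservation, is already in Proposition~\ref{prop:phi}.
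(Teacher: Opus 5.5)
Your proposal is correct and follows essentially the same route as the paper. Both arguments compose $L$ (Lemma~\ref{lem:L}), conjugation (Lemma~\ref{lem:conj}), and $\Phi_3$, and both use the residue preservation in Proposition~\ref{prop:phi} to match $\F^{(2)}(n-1)$ with $\B_3^{(2)}(n-1)$, since the first residue of a $3$-regular partition is the residue of its largest part. Your ``if and only if'' formulation of the restriction and the empty-case check are only a slightly tidier packaging of the paper's two-direction argument.
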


\begin{proof}
Let $\mu\in\D_3^{(0)}(n)$.  By Lemma~\ref{lem:L}, $L(\mu)\in\R(n-1)$.  By Lemma~\ref{lem:conj}, the conjugate $L(\mu)'$ lies in $\F^{(2)}(n-1)$.  Proposition~\ref{prop:phi} then gives a $3$-regular partition of $n-1$ with the same ordered nonzero residue sequence.  In particular, its largest part has residue $2$ modulo $3$, so $\Theta_n(\mu)\in\B_3^{(2)}(n-1)$.

The inverse is the map
\[
        \rho\longmapsto \mathcal T\left((\Phi_3^{-1}(\rho))'\right).
\]
Indeed, if $\rho\in\B_3^{(2)}(n-1)$, then its first nonzero residue is $2$.  Since $\Phi_3^{-1}$ preserves the ordered nonzero residue sequence, $\alpha:=\Phi_3^{-1}(\rho)$ lies in $\F^{(2)}(n-1)$.  Lemma~\ref{lem:conj} gives $\alpha'\in\R(n-1)$, and Lemma~\ref{lem:L} says that $\mathcal T$ reverses $L$.  Hence the displayed map is the inverse of $\Theta_n$.
\end{proof}

\section{The proof of Theorem~\ref{thm:thm2}}
At this point all intermediate maps have been checked.  The main bijection is therefore the composition of the finite Glaisher bijection with the inverse of the bridge just constructed. We now prove Theorem~\ref{thm:thm2}.

\begin{proof}[Proof of Theorem~\ref{thm:thm2}]
Proposition~\ref{prop:gamma} gives a bijection
\[
        \Gamma:\C_3(n)\longrightarrow\B_3^{(2)}(n-1).
\]
Proposition~\ref{prop:theta} gives a bijection
\[
        \Theta_n:\D_3^{(0)}(n)\longrightarrow\B_3^{(2)}(n-1).
\]
Therefore, we have
\[
        \iota_n:=\Theta_n^{-1}\circ\Gamma
        =\mathcal T\circ(\cdot)'\circ\Phi_3^{-1}\circ\Gamma
\]
is a bijection from $\C_3(n)$ to $\D_3^{(0)}(n)$.  Since $\D_3^{(0)}(n)\subseteq\D_3(n)$, it is also an injection from $\C_3(n)$ into $\D_3(n)$.
\end{proof}

\section{AxiomProver's autonomous Lean verification}\label{sec:AxiomProver}

Here we provide the context for this project as well as the protocol used for Lean
formalization and verification (see \cite{Mathlib2020, Lean}).
This paper is motivated by the broad question: can an AI system help discover and certify an explicit bijection between two infinite sequences of complicated combinatorial sets already known to be equinumerous?   We give an affirmative test case in the setting of a partition problem demonstrated by
Theorem~\ref{thm:thm1} and Theorem~\ref{thm:thm2}.  AxiomProver is an AI system currently under development at Axiom Math.

Theorem~\ref{thm:thm1} demonstrates that 
$$
D_3^{(0)}(n)=D_3^{(1)}(n)=D_3^{(2)}(n)
$$ 
for nonexceptional $n$. 
This theorem is proved through a standard manipulation of generating functions. It is well within reach for all experts in the field of $q$-series. AxiomProver independently generated and verified this theorem in Lean \cite{Lean}.

On the other hand, AxiomProver's proof of Theorem~\ref{thm:thm2} presents a more compelling case. Specifically, the Andrews-Dhar problem sought a bijective demonstration of the equality
$$
|\C_3(n)|=|\D_3(n)|/3
$$
for nonexceptional values of $n$.
As stated, this problem is ambiguous.  How does one establish a bijection with ``one third'' of a set without defining what that third actually is? Thankfully, Theorem~\ref{thm:thm1} provides three candidates. We set out  to establish a bijection
$$
\iota_n: \C_3(n) \longrightarrow \D_3^{(0)}(n).
$$
Theorem~\ref{thm:thm2} offers such an $\iota_n$ as a complicated composition of four partition maps, each of which is a bijection. 
The $\iota_n$ were discovered and verified by
AI-human collaboration, by means of guided search combined with smart prompting.
Indeed, as described below, AxiomProver's Lean verification of Theorem~\ref{thm:thm2} was split among four separate runs.

\subsection*{Process}
The formal proofs provided in this work were developed and verified using Lean~4.28.0.
Compatibility with earlier or later versions is not guaranteed due to the
evolving nature of the Lean 4 compiler and its core libraries.
The relevant files are all posted in the following repository:
\begin{center}
  \url{https://github.com/AxiomMath/andrews_dhar_problem}
\end{center}
The formalization consisted of five separate runs in parallel: one for
Theorem~\ref{thm:thm1}, and four for Theorem~\ref{thm:thm2}, the latter split
according to the four component bijections of $\iota_n$.  The runs are named
\texttt{thm1} and \texttt{thm2\_split1} through \texttt{thm2\_split4}, and they
formalize, respectively, Theorem~\ref{thm:thm1}, Proposition~\ref{prop:gamma}
(the finite Glaisher step), Lemma~\ref{lem:L} (the raising map),
Lemma~\ref{lem:conj} (conjugation), and Proposition~\ref{prop:phi} (the modulus
$3$ Stockhofe map $\Phi_3$).
For each run, the input files, collected under \texttt{input/<run>/}, were the following.
\begin{itemize}
\item \texttt{task.md}: a natural-language description of the problem to be formalized.
\item \texttt{Thm1WithProof.tex} or \texttt{Thm2WithProof.tex}: the \LaTeX{} statement and proof that the formalization was asked to follow.
\item \texttt{requirement.md}: additional constraints on the formalization, present for the \texttt{thm2\_split*} runs only, requiring each statement to be proved conditionally on Theorem~\ref{thm:thm1}; and, in the case of {\texttt{thm2\_split4}}, the main result in~\cite{Glaisher}.
\end{itemize}
Two further inputs were shared across all runs: a \texttt{.environment} file fixing the Lean version (\texttt{lean-4.28.0}), and \texttt{AndrewsDhar\_arXivPaper.tex}, the source paper of Andrews and Dhar \cite{AndrewsDhar}.
Given these files,
AxiomProver produced, for each run, the following output files, collected under \texttt{Bijection/<run>/}.
\begin{itemize}
 \item \texttt{problem.lean}: a translation of the problem statement into Lean.
 \item \texttt{solution.lean}: the formal, machine-checked solution in Lean.
\end{itemize}
For the runs \texttt{thm1} and \texttt{thm2\_split1} through \texttt{thm2\_split3},
both files were generated autonomously by AxiomProver. For the run
\texttt{thm2\_split4}, AxiomProver autonomously generated the formal problem
statement \texttt{problem.lean} together with a logically and structurally complete solution assuming the main result in \cite{Glaisher}.
The file \texttt{solution.lean} presented in the repository was obtained from that partial solution,
which an author then completed.

After the formal solutions were generated, the human authors wrote this paper
(without the use of AI) for human readers.
At first glance, the proofs found by AxiomProver may not resemble the narrative presented in this paper.
Turning a Lean file into a human-readable proof is difficult
because Lean is written as code for a type-checker.

\section*{Declaration of generative AI and AI-assisted technologies in the manuscript preparation process}
As described in the preceding Appendix,
AxiomProver (an AI tool under development)
was used to produce a formal proof of Theorem~\ref{thm:thm1} and Theorem~\ref{thm:thm2}.
The paper was written without AI.


\begin{thebibliography}{99}

\bibitem{AndrewsBook}
G. E. Andrews,
\emph{The Theory of Partitions},
Cambridge University Press, 1998.

\bibitem{AndrewsDhar}
G. E. Andrews and A. Dhar,
\emph{On Glaisher's partition theorem},
arXiv:2512.12346v3, 2026.

\bibitem{Glaisher}
J. W. L. Glaisher,
\emph{A theorem in partitions},
Messenger of Math.\ \textbf{12} (1883), 158--170.

\bibitem{Mathlib2020}
The mathlib Community,
The {L}ean mathematical library,
in \emph{Proceedings of the 9th ACM SIGPLAN International Conference on
Certified Programs and Proofs (CPP 2020)},
ACM, 2020.

\bibitem{Lean}
L.~de~Moura, S.~Kong, J.~Avigad, F.~van~Doorn, and J.~von~Raumer,
The {L}ean theorem prover (system description),
in \emph{Automated Deduction -- CADE-25},
Lecture Notes in Computer Science~9195, Springer, 2015, 378--388.

\bibitem{LiuGlaisherCompanion}
J.-C. Liu, \emph{A bijective approach to a companion of Glaisher's partition theorem}, preprint, 2026.


\bibitem{Stockhofe}
D. Stockhofe,
\emph{Bijektive Abbildungen auf der Menge der Partitionen einer nat\"urlichen Zahl},
Ph.D. thesis, Bayreuth. Math. Schr. \textbf{10} (1982), 1--59.



\end{thebibliography}
\end{document}